\def\hmeas#1.{\mathscr{H}^{\setbox0=\hbox{$#1\unskip$}\ifdim\wd0=0pt 1
    \else #1\fi}} %Hausdorff measure with default argument by
\def\real{{\mathbb{R}}}
\def\zahlen{{\mathbb Z}}
\def\glip#1.{{\bf L}(#1)} %global Lipschitz constant
\DeclareMathOperator\diam{diam}%diametre
\DeclareMathOperator\im{Im}%image 
\DeclareMathOperator\Inside{Int}%image 
\def\natural{{\mathbb N}}
\DeclareMathOperator\dom{dom} % domain
\newcommand{\on}{\:\mbox{\rule{0.1ex}{1.2ex}\rule{1.1ex}{0.1ex}}\:}
\def\cmass#1.{{\|#1\|}}%current mass
\newcommand{\dist}{\operatorname{dist}}
\numberwithin{equation}{section} % forces the section to appear before
\theoremstyle{plain}
\newtheorem{lem}[equation]{Lemma}
\newtheorem{thm}[equation]{Theorem}
\theoremstyle{definition}
\newtheorem{defn}[equation]{Definition}
\newtheorem{constr}[equation]{Construction}
\theoremstyle{remark}
\begin{document}
\title{Unrectifiable normal currents in Euclidean spaces}
\author{Andrea Schioppa}
\email{andrea.schioppa@math.ethz.ch}
\keywords{Normal current, Rectifiability}
\subjclass[2010]{49Q15, 28A75}
%%%%%%%%%%%%%%%%%%%%%%%%%%%%%%%%%%%%%%%%%%%%%%%%%%%%%%%%%%%%%%%%%%%%%%%%%%
\begin{abstract}
  We construct in $\real^{k+2}$ a $k$-dimensional simple normal
  current whose support is purely $2$-unrectifiable. The result is
  sharp because the support of a normal current cannot be purely
  $1$-unrectifiable and a $(k+1)$-dimensional normal current can be
  represented as an integral of $(k+1)$-rectifiable currents. This
  gives a negative answer to the (revised version) of a question of Frank Morgan (1984).
\end{abstract}
%%%%%%%%%%%%%%%%%%%%%%%%%%%%%%%%%%%%%%%%%%%%%%%%%%%%%%%%%%%%%%%%%%%%%%%%%% 
\maketitle
\tableofcontents
%%%%%%%%%%%%%%%%%%%%%%%%%%%%%%%%%%%%%%%%%%%%%%%%%%%%%%%%%%%%%%%%%%%%%%%%%%
\section{Introduction}
\label{sec:intro}
%%%%%%%%%%%%%%%%%%%%%%%%%%%%%%%%%%%%%%%%%%%%%%%%%%%%%%%%%%%%%%%%%%%%%%%%%%
\subsection{Results}
\label{subsec:results}
This paper is a continuation of \cite{schioppa_metric_sponge} to which we
refer for more background and notation. The main motivation behind
\cite{schioppa_metric_sponge} was to provide new examples of
Ambrosio-Kirchheim metric currents~\cite{ambrosio-kirch} and to prove that
higher-dimensional analogues of some results in
\cite{deralb,curr_alb} \emph{do not} hold. Specifically, in
\cite{deralb} it was shown that in metric measure spaces
vector fields can be concretely described as a superposition of
partial derivative operators associated with curve fragments. In
particular, the background measure $\mu$ appearing in the definition
of vector fields (see for example Subsec.~2.1 in
\cite{schioppa_metric_sponge} about Weaver derivations) has to
admit Alberti representations or, more precisely, has to be
\textbf{$1$-rectifiably representable}; this means that $\mu$ can be
represented as an integral of $1$-dimensional Hausdorff measures associated with curve
fragments $\gamma$: $\mu=\int \hmeas 1.\on \gamma dQ(\gamma)$. In the
case in which a higher order representation exists, i.e.~$\mu=\int
\hmeas k.\on\sigma dQ(\sigma)$ where $\sigma$ is a $k$-rectifiable
compact set (see \cite{ambrosio-rectifiability} for the theory of
rectifiable sets in metric spaces) we will say that $\mu$ is
\textbf{$k$-rectifiably representable}.
\par In \cite{curr_alb} it was later shown that $1$-dimensional
metric currents admitted an integral representation in terms of
$1$-rectifiable metric currents $T=\int [[\gamma]]\,dQ(\gamma)$
($[[\gamma]]$ being the current associated to an oriented fragment) and
that $k$-dimensional metric currents could be canonically associated to
$k$-dimensional vector fields obtaining a parallel between the metric
theory of Ambrosio-Kirchheim~\cite{ambrosio-kirch} and the classical
theory of Federer
and Fleming~\cite[Ch.~4]{federer_gmt}. A natural question we had at the time was whether a
$2$-dimensional metric
current $T$ could be represented as an integral of $2$-rectifiable
currents $T=\int [[\sigma]]\,dQ(\sigma)$. Some specific examples of
\emph{non-simple} (i.e.~the associated vector fields are not simple)
$2$-dimensional currents with $2$-purely unrectifiable supports had
been obtained by Marshall Williams~\cite{williams-currents} in Carnot groups. In
\cite{schioppa_metric_sponge} we obtained a general negative answer
constructing for each $k$ a simple $k$-dimensional normal metric
current whose support is purely $2$-unrectifiable. Unfortunately, those
currents could not be constructed in Euclidean spaces. In this paper
we complete the treatment by:
%%%%%%%%%%%%%%%%%%%%%%%%%%%%%%%%%%%%%%%%%%%%%%%%%%%%%%%%%%%%%%%%%%%%%%%%%%
\begin{constr}
  \label{constr:main_result}
  In $\real^{k+2}$ there is a $k$-dimensional normal current whose
  support is purely $2$-unrectifiable.
\end{constr}
Note that our normal currents are also classical normal currents, thus
providing examples of normal currents which live on $2$-unrectifiable
subsets. 
%%%%%%%%%%%%%%%%%%%%%%%%%%%%%%%%%%%%%%%%%%%%%%%%%%%%%%%%%%%%%%%%%%%%%%%%%%
\subsection{Relation to previous work}
\label{subsec:prev}
Even though I came across this problem while finishing my dissertation
in 2014, I later found that other researchers had previously
considered it. I learned from Giovanni Alberti that he had considered
also this problem, and later found out the following question of Frank
Morgan~\cite[Problem~3.8, pg.~446]{morgan-plist}:
\begin{description}
\item[(Q-Morgan)] Question of Frank Morgan. ``Can every normal current in
  $\real^s$ be decomposed as a convex integral combination of integral
  currents? In codimension one the answer is \emph{yes} if $\partial
  T=0$, see~\cite[\# 4.5.9(13)]{federer_gmt}''. (In my own words): Is a $k$-dimensional
  normal current $T$ in $\real^s$ representable as an integral
  $\int[[\sigma]]\,dQ(\sigma)$ of $k$-integral currents enforcing the
  \textbf{mass constraint} (here is what ``convex'' probably means)
 $\cmass T.=\int\cmass[[\sigma]].\,dQ(\sigma)$? Here $\cmass T.$
 denotes the mass measure of $T$.
\end{description}
The answer to \textbf{(Q-Morgan)} for $k=1$ is positive
by the beautiful work of Stanislav Smirnov~\cite{smirnov-solenoidal}, 
and recently we have learned from Alberti
and Massacesi that this is also the case for $k=s-1$ as a consequence
of the coarea formula for BV functions~\cite{massaccesi_phd,ambrosio-bvbook}: 
essentially they find a ``good filling'' for
the boundary of the normal current to reduce the problem to the case
$\partial T=0$ sketched by Morgan. 
\par In general the answer to \textbf{(Q-Morgan)} is negative:
Zworski~\cite{zworski-normal} gives as counterexample $T=\xi\,\hmeas
s.$ where $\xi$ is a suitable non-involutive $k$-field (a small gap in
his argument is pointed out and fixed
in~\cite[Chap.~2]{massaccesi_phd}). However, these examples are still
representable as integrals of integral currents if one drops the mass
constraint, and if one wants to keep the mass constraint, one can use
a remarkable Theorem of Alberti~\cite{alberti-lusin} to obtain an integral
decomposition into rectifiable currents by finding rectifiable sets
tangent to the non-involutive distribution. This suggests the following
revised version of \textbf{(Q-Morgan)}:
\begin{description}
\item[(Q-MorganRev)]  Is a $k$-dimensional
  normal current $T$ in $\real^s$ representable as an integral
  $\int[[\sigma]]\,dQ(\sigma)$ of \emph{$k$-rectifiable currents}
  \emph{without} necessarily satisfying the mass constraint?
\end{description}
Our result answers
\textbf{(Q-MorganRev)} in the negative for all $k\le s-2$: the support of a
$k$-current does not need even to intersect a $2$-rectifiable set in
positive area.
\par Our work has also applications to the recent structure theory for
measures developed in~\cite{acp_proceedings}. In particular, this answers the
problem of whether measures that admit a $k$-tangent field (this
essentially gives the directions along which a Rademacher Theorem on
the differentiability of Lipschitz functions holds) in the
sense of~\cite{acp_proceedings} are $k$-rectifiably
representable. Following~\cite{curr_alb,rindler-afree} we rephrase the problem
in the language of normal currents:
\begin{description}
\item[(Q-ACP)] Question of Alberti, Cs\"ornyei and
  Preiss~\cite[Sec.~2]{acp_proceedings}. 
If $\mu$ is a Radon measure on $\real^s$
  and for $1<k\le s$ there are $k$ $1$-dimensional normal currents
  $\{N_i\}_{i=1}^k$ with $\mu\ll \cmass N_i.$ and such that at
  $\mu$-a.e.~point the
  vector fields associated to the $N_i$ are independent, is then $\mu$
  $k$-rectifiably representable?
\end{description}
For $k=s$ \textbf{(Q-ACP)} has a positive answer by the recent work of
de Philippis and Rindler~\cite{rindler-afree}. For $s=3$ and $k=2$ a negative result has been
been announced by Andras Mathe~\cite{mathe-flow}. Our construction answers
\textbf{(Q-ACP)} in the negative for all $k\in\{2,\cdots,s-2\}$. It is
likely that modifications to our approach can also yield the negative answer
for $k=s-1$, but we do not pursue it further because it is likely to
follow also from the announced results of \cite{mathe-flow}.

%%%%%%%%%%%%%%%%%%%%%%%%%%%%%%%%%%%%%%%%%%%%%%%%%%%%%%%%%%%%%%%%%%%%%%%%%%
\subsection{Organization}
\label{subsec:organization}
\par In the paper we follow the same approach in which we discovered
the result: there are the following $3$-layers:
\begin{description}
\item[Layer 1] A $2$-normal current in the Hilbert space $l^2$ whose support is purely
  $2$-unrectifiable.
\item[Layer 2] A $2$-normal current in $\real^4$ whose support is
  purely $2$-unrectifiable.
\item[Layer 3] A $k$-normal current in $\real^{k+2}$ whose support is
  purely $2$-unrectifiable.
\end{description}
\par \textbf{Layer 1} (Sec.~\ref{sec:2hilb}) is already non-trivial
because the Hilbert space has
the Radon-Nikodym property, i.e.~Lipschitz Hilbert-valued functions are differentiable
a.e. It is not hard to show that this implies that the examples in
\cite{schioppa_metric_sponge} cannot be bi-Lipschitz embedded in Hilbert
space. However, we are able to find a topological embedding of those
examples which is Lipschitz; an examination of the construction allows
to find a ``rate of collapse'' of the fibers of the double covers used
in \cite{schioppa_metric_sponge} which allows to prove
$2$-unrectifiability. Unfortunately, the Radon-Nikodym property prevents the use
of a simple blow-up argument as in \cite{schioppa_metric_sponge} and we must
resort to a quantitative estimate based on holonomy.
\par In \textbf{Layer 2} (Sec~\ref{sec:2r4}) we pass from Hilbert space to $\real^4$ by resorting to kernel
methods (see for example~\cite[Ch.~5]{mohri-found},
\cite{Guyon-automaticcapacity}) 
which are well-known in the SVMs
literature. Essentially the kernel trick allows to train an
SVM on an $\infty$-dimensional \emph{implicit} 
set of features even though the data set has (obviously)
only features living in a finite dimensional space. For example, in
$\real^4$ we can fabricate something like the Hilbert
space $l^2$ (countable sequences) using kernel
functions. Unfortunately, this
approach destroys the
approximate ``self-similarity'' of the construction in Hilbert space
making the details more technical and lengthy. In particular, we must resort to
curvilinear $(1+\varepsilon)$-Lipschitz projections to resolve the
fine structure of the support of the current at a given scale. 
\par In \textbf{Layer 3} (Sec~\ref{sec:krk}) we obtain the general case using a simple
idea from \cite{schioppa_metric_sponge} (I am indebted to Bruce
Kleiner for it) which consists in destroying
Lipschitz surfaces which are graphs on any pair of coordinate axes.
%%%%%%%%%%%%%%%%%%%%%%%%%%%%%%%%%%%%%%%%%%%%%%%%%%%%%%%%%%%%%%%%%%%%%%%%%%
\subsection{Notational conventions}
\label{subsec:conventions}
For notational conventions, background and terminology we refer the reader to
\cite[Sec.~2]{schioppa_metric_sponge}. Here we use a more general notion of
weak* convergence for Lipschitz functions.
%**********************************
\begin{defn}[Weak* convergence for Lipschitz maps]
  \label{defn:weak*conv}
  Let $\{f_n\}_n$ be a sequence of Lipschitz maps $f_n:X\to Y$. We say
  that $f_n$ converges to a Lipschitz map $f:X\to Y$ in the \textbf{weak*
  sense} (and write $f_n\xrightarrow{\text{w*}} f$) if $f_n\to f$
  pointwise and $\sup_n\glip f_n.<\infty$, where $\glip f_n.$ denotes
  the Lipschitz constant of $f_n$.
  \par Assume that the sets $X_n\subset Z$ converge to the set
  $X\subset Z$ in the Hausdorff sense. For $x\in X$ we say that
  $\{x_n\}_n\subset Z$ with $x_n\in X_n$ \textbf{represents} $x\in X$ if
  $x_n\to x$.  Let $\{f_n\}_n$ be a sequence of Lipschitz maps
  $f_n:X_n\to Y$. We say
  that $f_n$ converges to a Lipschitz map $f:X\to Y$ in the \textbf{weak*
  sense} (and write $f_n\xrightarrow{\text{w*}} f$) if $\sup_n\glip
f_n.<\infty$, and whenever $\{x_n\}_n$ represents $x$, $f_n(x_n)\to
f(x)$. 
\par Note that in the previous definition one may check, for each $x$,
that $f_n(x_n)\to f(x)$ just for one sequence $\{x_n\}_n$ representing
$x$, thanks to the uniform bound on the Lipschitz constants of the
functions $f_n$.
\par Now assume also that the sets $Y_n\subset W$ converge to the set
  $Y\subset W$ in the Hausdorff sense.  We say
  that a squence $f_n:X_n\to Y_n$ converges to a Lipschitz map $f:X\to Y$ in the \textbf{weak*
  sense} (and write $f_n\xrightarrow{\text{w*}} f$) if $\sup_n\glip
f_n.<\infty$, and whenever $\{x_n\}_n$ represents $x$, $f_n(x_n)$
represents $f(x)$.
\end{defn}
In this paper there are only a couple of points where we use measured
Gromov-Hausdorff convergence. For background and notational
conventions we refer to \cite[Subsec.~3.1]{dim_blow}. However,
here we always reduce to the classical case by assuming that
convergence takes place in a \textbf{container} $Z$: if $(X_n,\mu_n)$
converges to $(X,\mu)$ in the measured Gromov-Hausdorff sense, we
assume that $X_n$ and $X$ are isometrically embedded in $Z$, and then
that $X_n\to X$ in the Hausdorff sense and $\mu_n\to\mu$ in the weak*
sense for Radon measures (i.e.~as functionals on continuous functions
defined on $Z$ which are bounded and have bounded support).
\par Finally, we use the convention $a\simeq b$ (or $a\approx b$) to say that $a/b,b/a\in[C^{-1},C]$
where $C$ is a universal constant; when we want to highlight $C$ we
write $a\simeq_Cb$. We similarly use notations like $a\lesssim b$ and
$a\gtrsim_Cb$.
%%%%%%%%%%%%%%%%%%%%%%%%%%%%%%%%%%%%%%%%%%%%%%%%%%%%%%%%%%%%%%%%%%%%%%%%%%
\subsection*{Acknowledgements}
\label{subsec:ackwn}
This work has been partially supported by by the ``ETH Zurich Postdoctoral Fellowship Program and the Marie Curie Actions
  for People COFUND Program''.
%%%%%%%%%%%%%%%%%%%%%%%%%%%%%%%%%%%%%%%%%%%%%%%%%%%%%%%%%%%%%%%%%%%%%%%%%%
\section{$2$-current in Hilbert space}
\label{sec:2hilb}
\def\sqfm#1,#2.{\setbox1=\hbox{$#1$\unskip}\setbox2=\hbox{$#2$\unskip}\text{\normalfont
    Sq}_{\ifdim\wd1>0pt #1\else i\fi}({\ifdim\wd2>0pt #2\else Q\fi}) }
\def\odec{_\text{\normalfont o}}
\def\cdec{_\text{\normalfont c}}
\def\adec{_\text{\normalfont a}}
%%%%%%%%%%%%%%%%%%%%%%%%%%%%%%%%%%%%%%%%%%%%%%%%%%%%%%%%%%%%%%%%%%%%%%%%%%
Let $\{X_i\}_i$ denote the inverse system of square complexes in
\cite[Sec.~4]{schioppa_metric_sponge}, denote by
$X_\infty$ the corresponding inverse limit, and for $m\le n$
($n=\infty$ being allowed) let $\pi_{n,m}:X_n\to X_m$  denote the corresponding
$1$-Lipschitz projection. We let $\delta_n\searrow 0$ denote a
sequence with $\sum_n\delta_n=\infty$ and $\sum_n\delta_n^2<\infty$:
the precise form of $\delta_n$ will be determined later.
\par We briefly recall how $X_{i+1}$ is obtained out of $X_i$. 
Let $\sqfm ,X_i.$ denote the set of squares of
generation $i$ of $X_i$, whose side length is $l_i=5^{-i}$. To get
$X_{i+1}$ one subdivides each square $Q\in\sqfm ,X_i.$ and applies the
following operation. The square $Q$ is subdivided into squares of
generation $i+1$; there are $5^2$ such squares that, up to idenfying
$Q$ with $[0,5]^2$, can be indexed by the location of their south-west
corner by pairs $(j_1,j_2)\in\left\{0,\cdots, 4\right\}^2$.
These squares are grouped into three pieces: 
\begin{itemize}
\item The central square $Q\cdec$ corresponding to
  $(j_1,j_2)=(2,2)$.
\item The outer annulus $Q\odec$ corresponding to the squares where
  either $j_1\in\{0,4\}$ or $j_2\in\{0,4\}$.
\item The middle annulus $Q\adec$ consisting of the squares neither in
  $Q\cdec$ nor in $Q\odec$.
\end{itemize}
We make the simple observation $\hmeas
2.(Q\adec)\ge\frac{8}{25} \hmeas 2.(Q)$ and replace $Q\adec$ by a double cover
$\tilde Q\adec$, split the Lebesgue measure on $Q\adec$ in half and glue $\tilde Q\adec$
back to $Q\cdec$ and $Q\odec$ by collapsing the fibers of the cover on
the boundary $\partial \tilde Q\adec$ to match $\partial Q\cdec$ and the
inner component of $\partial Q\odec$. Let
$\tilde Q$ denote the square-complex thus obtained.
%%%%%%%%%%%%%%%%%%%%%%%%%%%%%%%%%%%%%%%%%%%%%%%%%%%%%%%%%%%%%%%%%%%%%%%%%%
\begin{constr}[A map $\Psi:\tilde Q\to\real^2$ depending on a parameter
  $\delta$]
  \label{constr:basic_map}
  Fix $\delta>0$ small.
  Let $\hat Q\adec\subset Q\adec$ be the central annulus of the first
  subdivision of $Q\adec$ consisting of those squares in $\sqfm
  i+2,Q\adec.$ which are at distance $\ge 5^{-i-2}$ from $\partial
  Q\adec$. We observe that: $\hmeas 2.(\hat
  Q\adec)\ge\frac{3}{5}\hmeas 2.(Q\adec)$.
\par Choose a $1$-cell $\sigma$ in the $1$-skeleton of $\sqfm
i+1,Q\adec.$ which joins the two components of $\partial Q\adec$. Note
that $\sigma$ can be used to choose an ``origin'' of the angles for a
polar coordinate system $(r,\theta)$ on $Q\adec$. Formally, we
identify $Q\adec\simeq[0,5^{-i-1}]\times S^1$ and on
$Q\adec\setminus\sigma$ we have polar coordinates
$(r,\theta):Q\adec\setminus\sigma\to[0,5^{-i-1}]\times (0,2\pi)$. Moreover, the set $\hat
Q\adec\setminus\sigma$ is determined by the condition $r\in[5^{-i-2}, 5^{-i-1}-5^{-i-2}]$.
\par Let
  $\tilde\pi:\tilde Q\to Q$ denote the double cover and note that on
  $\Sigma = \tilde\pi^{-1}(Q\adec\setminus\sigma)$ we get a polar coordinate
  system $(r,\theta):\Sigma\to[0,
  5^{-i-1}]\times[(0,4\pi)\setminus\{2\pi\}]$, and that the map
  $\tilde\pi$, in polar coordinates, assumes the form
  $\tilde\pi(r,\theta)=(r,\theta\mod 2\pi)$. In particular,
  $\tilde\pi^{-1}(\sigma)$ divides $\Sigma$ in two sheets: $\Sigma_+$
  where $\theta\in(2\pi, 4\pi)$, and $\Sigma_-$ where
  $\theta\in(0,2\pi)$. We let $\chi$ denote the characteristic
  function of $\Sigma_+$; the following observation is crucial in the
  following:
  \begin{description}
  \item[(ShSep)] If $p,q\in\Sigma$, $d_{\tilde Q}(p,q)\le 5^{-i-3}$
    and $\tilde\pi(p)$ and $\tilde\pi(q)$ are on opposite sides of
    $\sigma$ (i.e~$|\theta(\tilde\pi(p)) -
    \theta(\tilde\pi(q))|\ge\pi$), then $\chi(p)\ne\chi(q)$.
  \end{description}
  \par We now define two helper functions $h_1,h_2:[0,4\pi]\to\real$:
  \begin{align}
    \label{eq:helperH1}
    h_1(\theta) &= \frac{\delta}{2\pi}\left(2\pi - |\theta -
      2\pi|\right),\\
    \label{eq:helperH2}
    h_2(\theta) &=
    \begin{cases}
      -\frac{\delta}{\pi}\theta&\text{if $\theta\in[0,\pi]$,}\\
      -\delta + \frac{\delta}{\pi}(\theta - \pi)&\text{if
        $\theta\in[\pi,3\pi]$,}\\
      \delta - \frac{\delta}{\pi}(\theta - 3\pi)&\text{if
        $\theta\in[3\pi, 4\pi]$.}
    \end{cases}
  \end{align}
  Note that the global Lipschtiz constants of $h_1$ and $h_2$ are:
  $\glip h_1. = \delta/(2\pi)$ and $\glip h_2. = \delta/\pi$. One also
  has the lower bound:
  \begin{equation}
    \label{eq:HlowBound}
    \inf_{\theta\in[0,2\pi]}\left[
      (h_1(\theta)-h_1(\theta+2\pi))^2 + (h_2(\theta) - h_2(\theta+2\pi))^2
      \right]^{1/2}\ge\frac{\delta}{2},
  \end{equation}
  which is proven in three cases; case $\theta\in[0,\pi/2]$: then
  $h_1(\theta)\le\delta/4$ and $h_1(\theta+2\pi)\ge 3\delta/4$; case
  $\theta\in[\pi/2, 3\pi/2]$: then $h_2(\theta)\in[-\delta,-\delta/2]$
  and $h_2(\theta+\pi)\in[\delta/2,\delta]$; case
  $\theta\in[3\pi/2,2\pi]$: then $h_1(\theta)\ge 3\delta/4$ and
  $h_1(\theta+2\pi)\le\delta/4$.
  \par We now define the $5$-Lipschitz cut-off function
  $\phi:[0,5^{-i-1}]\to\real$:
  \begin{equation}
    \label{eq:phiCutOff}
    \phi(r) = 
    \begin{cases}
      5r&\text{if $r\in[0, 5^{-i-2}]$,}\\
      5^{-i-1}&\text{if $r\in[5^{-i-2}, 5^{-i-1}-5^{-i-2}]$,}\\
      5^{-i-1}[1-5^{i+2}(r- 5^{-i-1}+5^{-i-2})]&\text{if
        $r\in[5^{-i-1}-5^{-i-2}, 5^{-i-1}]$,}
    \end{cases}
  \end{equation}
  and note that $\|\phi\|_\infty\le 5^{-i-1}$.
  \par 
  We now define $\Psi$ using polar coordinates:
  \begin{equation}
    \label{eq:PhiPolar}
    \begin{aligned}
      \Psi&:\Sigma\to\real^2\\
      (r,\theta)&\mapsto(\phi(r)h_1(\theta),\phi(r)h_2(\theta)),
    \end{aligned}
  \end{equation}
  and find the unique continuous extension $\Psi:\tilde Q\to\real^2$
  with $\Psi=0$ on $Q\cdec\cup Q\odec$. We now collect the important
  properties of $\Psi$. First, if $p_1,p_2\in\tilde \pi^{-1}(q)$ for
  $q\in Q\adec\setminus\sigma$ and $|\theta(p_1)-\theta(p_2)|=\pi$,
  then~(\ref{eq:HlowBound}) implies:
  \begin{equation}
    \label{eq:constr:basic_map_1}
    \|\Psi(p_1) - \Psi(p_2)\|_{\real^2}\ge\frac{\delta}{2}\phi(r(p_1)).
  \end{equation}
  Second from the upper bound on $\phi$ we get:
  \begin{equation}
    \label{eq:constr:basic_map_2}
    \|\Psi\|_{\real^2}\le\delta\diam Q,
  \end{equation}
 and third, from computing $d\Psi$ and using the standard Riemannian metric
 $r^2d\theta^2+dr^2$ on $\Sigma$, we estimate the global Lipschitz
 constant of $\Psi$:
  \begin{equation}
    \label{eq:constr:basic_map_3}
    \glip \Psi.\in[\delta,7\delta].
  \end{equation}
\end{constr}
%%%%%%%%%%%%%%%%%%%%%%%%%%%%%%%%%%%%%%%%%%%%%%%%%%%%%%%%%%%%%%%%%%%%%%%%%%
In the following we let $\{e_i\}_{i=1}^\infty$ denote the standard orthonormal basis of $l^2$.
\begin{constr}[Construction of maps  $F_i:X_i\to l^2$]
  \label{constr:maps_to_hilbert}
  The map $F_0:X_0\to l^2$ is just an isometric embedding of the
  square $X_0$ in the plane $e_1\oplus e_2$. To get $F_1:X_1\to l^2$
  we modify $F_0\circ\pi_{1,0}$ by adding to it $\Psi_{\delta_1}\otimes (e_3\oplus e_4)$:
  this notation means that we take the map $\Psi$ from
  Construction~\ref{constr:basic_map} with $\delta=\delta_1$ and with $\tilde Q$
  the unique square $\{Q\}=\sqfm 0,X_0.$, 
  and then we identify the codomain of $\Psi$ with the plane $e_3\oplus
  e_4$. In particular note that:
  \begin{align}
    \label{eq:maps_to_hilbert_1}
    \|F_0\circ\pi_{1,0}-F_1\|_{\infty}&\lesssim \delta_15^{-1}
\\
    \label{eq:maps_to_hilbert_2}
    \glip F_1.&\lesssim(1+\delta_1^2)^{1/2}.
  \end{align}
  \par For $i\ge 1$, the map $F_{i+1}$ is defined by induction. We first
  have that $\im F_i$ is a subset of the hyperplane of $l^2$ spanned
  by the vectors $\{e_\alpha\}_{1\le \alpha\le 2i+2}$; then for each
  $Q\in\sqfm i,X_i.$ we choose $\Psi_{\delta_i,Q}:\tilde Q\to\real^2$
  as in Construction~\ref{constr:basic_map} setting
  $\delta=\delta_{i+1}$, and we then let:
  \begin{equation}
    \label{eq:constr:maps_to_hilbert_3}
    F_{i+1} = F_i\circ\pi_{i+1,i} + \sum_{Q\in\sqfm
      i,X_i.}\Psi_{\delta_i,Q}\otimes(e_{2i+3}\oplus e_{2i+4}).
  \end{equation}
  As we have inserted the new contributions in a plane orthogonal to
  $\im F_i$ we conclude that:
  \begin{equation}
    \label{eq:constr:maps_to_hilbert_4}
    \glip F_{i+1}.\lesssim(1+\delta_1^2+\cdots+\delta_i^2)^{1/2},
  \end{equation}
  and moreover:
  \begin{equation}
    \label{eq:constr:maps_to_hilbert_5}
    \|F_i\circ\pi_{i+1,i}-F_{i+1}\|_\infty\lesssim\delta_i5^{-i-1}.
  \end{equation}
\end{constr}
%%%%%%%%%%%%%%%%%%%%%%%%%%%%%%%%%%%%%%%%%%%%%%%%%%%%%%%%%%%%%%%%%%%%%%%%%%
\begin{lem}[Convergence of the maps $F_i\circ\pi_{\infty,i}$]
  \label{lem:conv_mps_hilbert}
  The pull-backs $F_i\circ\pi_{\infty,i}$ converge uniformly to a map
  $F_\infty : X_\infty\to l^2$ whose Lipschitz constant satisfies:
  \begin{equation}
    \label{eq:conv_mps_hilbert_1}
    \glip F_\infty.\lesssim\left(1+\sum_i\delta_i^2\right)^{1/2}.
  \end{equation}
  Let $P_i:l^2\to l^2$ denote the orthogonal projection of $l^2$ onto
  the hyperplane spanned by $\{e_1,e_2,\cdots,e_{2i+1},
  e_{2i+2}\}$ and let $i\le j$ where $j=\infty$ is
  admissible. Defining $Y_j=F_j(X_j)$ we have a commutative diagram:
  \begin{equation}
    \label{eq:conv_mps_hilbert_2}
    \xy
    (0,20)*+{X_j} = "xj"; (20,20)*+{Y_j} = "yj";
    (0,0)*+{X_i} = "xi"; (20,0)*+{Y_i} = "yi";
    {\ar "xj"; "yj"}?*!/_2mm/{F_j};
    {\ar "xi"; "yi"}?*!/^2mm/{F_i};
    {\ar "xj"; "xi"}?*!/^4mm/{\pi_{j,i}};
    {\ar "yj"; "yi"}?*!/_4mm/{P_i};
    \endxy
  \end{equation}
\end{lem}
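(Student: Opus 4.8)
The plan is to establish the three assertions — uniform convergence of the $F_i\circ\pi_{\infty,i}$, the bound \eqref{eq:conv_mps_hilbert_1}, and commutativity of \eqref{eq:conv_mps_hilbert_2} — in that order, each being a short deduction from the estimates already recorded in Construction~\ref{constr:maps_to_hilbert}.

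For convergence I would argue as follows. Since $\pi_{\infty,i+1}:X_\infty\to X_{i+1}$ is $1$-Lipschitz, precomposition with it does not increase the sup-norm, so \eqref{eq:constr:maps_to_hilbert_5} yields
\[
\|F_i\circ\pi_{\infty,i}-F_{i+1}\circ\pi_{\infty,i+1}\|_\infty
 =\|(F_i\circ\pi_{i+1,i}-F_{i+1})\circ\pi_{\infty,i+1}\|_\infty
 \lesssim\delta_i5^{-i-1}.
\]
As $\delta_i\searrow 0$, the series $\sum_i\delta_i5^{-i-1}$ converges, so $\{F_i\circ\pi_{\infty,i}\}_i$ is uniformly Cauchy; by completeness of $l^2$ it converges uniformly to a map $F_\infty:X_\infty\to l^2$. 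For \eqref{eq:conv_mps_hilbert_1}, the estimates \eqref{eq:maps_to_hilbert_2} and \eqref{eq:constr:maps_to_hilbert_4}, together with $\glip\pi_{\infty,i}.\le 1$, give $\glip F_i\circ\pi_{\infty,i}.\lesssim(1+\delta_1^2+\cdots+\delta_{i-1}^2)^{1/2}\le(1+\sum_j\delta_j^2)^{1/2}$ uniformly in $i$; since the Lipschitz constant is lower semicontinuous under pointwise (hence uniform) convergence, the same bound passes to $F_\infty$.

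For the diagram I would first treat finite $j$. By construction the perturbation added in passing from $F_i$ to $F_{i+1}$ takes values in the plane $e_{2i+3}\oplus e_{2i+4}$, which is orthogonal to $\im P_i=\operatorname{span}\{e_1,\dots,e_{2i+2}\}$, whereas $F_i\circ\pi_{i+1,i}$ already takes values in $\im P_i$; hence $P_i\circ F_{i+1}=F_i\circ\pi_{i+1,i}$. Iterating this along the tower by downward induction on $i$ — using $P_i=P_i\circ P_{i+1}$ and $\pi_{j,i}=\pi_{i+1,i}\circ\cdots\circ\pi_{j,j-1}$ — gives $P_i\circ F_j=F_i\circ\pi_{j,i}$ for all finite $i\le j$. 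Composing with $\pi_{\infty,j}$ and using $\pi_{j,i}\circ\pi_{\infty,j}=\pi_{\infty,i}$ turns this into $P_i\circ(F_j\circ\pi_{\infty,j})=F_i\circ\pi_{\infty,i}$; letting $j\to\infty$ and invoking continuity of $P_i$ together with the uniform convergence $F_j\circ\pi_{\infty,j}\to F_\infty$ gives $P_i\circ F_\infty=F_i\circ\pi_{\infty,i}$, i.e.\ the $j=\infty$ case.

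I do not expect any genuine obstacle here: the only point that demands care is the index bookkeeping in the last step, namely checking that the perturbation inserted at each stage really does land in a plane orthogonal to $\im P_i$ for every relevant $i$. Everything else reduces to completeness of $l^2$, summability of $\sum_i\delta_i5^{-i-1}$, lower semicontinuity of the Lipschitz constant, and the inverse-system identities.
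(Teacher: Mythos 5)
Your proposal is correct and follows essentially the same route as the paper: uniform convergence from the telescoping estimate \eqref{eq:constr:maps_to_hilbert_5}, the Lipschitz bound from the uniform bound \eqref{eq:constr:maps_to_hilbert_4} together with lower semicontinuity of the Lipschitz constant, and commutativity for finite $j$ from the orthogonality of the added perturbations, passed to the limit for $j=\infty$. The paper states all of this in three lines; you have merely filled in the routine details.
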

%%%%%%%%%%%%%%%%%%%%%%%%%%%%%%%%%%%%%%%%%%%%%%%%%%%%%%%%%%%%%%%%%%%%%%%%%%
\begin{proof}
  By~(\ref{eq:constr:maps_to_hilbert_5}) the $F_i\circ\pi_{i+1,i}$
  converge uniformly and the limit map
  $F_\infty$ satisfies the Lipschitz
  bound~(\ref{eq:conv_mps_hilbert_1})
  as~(\ref{eq:constr:maps_to_hilbert_4}) implies a uniform bound on
  the Lipschitz constants of the $\{F_i\}_i$. When $j<\infty$ the
  commutativity of the diagram~(\ref{eq:conv_mps_hilbert_2}) follows
  from the definition of the maps $\{F_i\}_i$; for $j=\infty$ one passes
  the commutativity to the limit.
\end{proof}
%%%%%%%%%%%%%%%%%%%%%%%%%%%%%%%%%%%%%%%%%%%%%%%%%%%%%%%%%%%%%%%%%%%%%%%%%%
In the following we let $N_\infty$ be the $2$-normal current
canonically associated to $X_\infty$: details and the precise definition of
$N_\infty$ are in \cite[Sec.~3]{schioppa_metric_sponge}. Recall also that,
even though $N_\infty$ is a \emph{metric current}, the calculus on
$X_\infty$ is similar to the classical one in $\real^2$, and $N_\infty$ admits a
``classical'' $2$-vector-field
representation: $N_\infty = \partial_x\wedge\partial_y\,d\mu_{X_\infty}$.
\begin{lem}[Existence and nontriviality of the $2$-current]
  \label{lem:nontriv_curr_hilbert}
  The push-forward $F_{\infty\#}N_\infty$ is a
  nontrivial $2$-normal current in $l^2$ supported on $Y_\infty$.
\end{lem}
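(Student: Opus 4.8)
The plan is to verify the two assertions—nontriviality and the support condition—separately, exploiting that pushing forward a normal current by a Lipschitz map again yields a normal current (this is the standard functoriality of metric normal currents, with $\mathbf{M}(F_{\infty\#}N_\infty)\le\glip F_\infty.^2\,\mathbf{M}(N_\infty)$ and $\partial(F_{\infty\#}N_\infty)=F_{\infty\#}(\partial N_\infty)$). Since $N_\infty$ is normal on $X_\infty$ by \cite[Sec.~3]{schioppa_metric_sponge}, and $\glip F_\infty.<\infty$ by Lemma~\ref{lem:conv_mps_hilbert}, $F_{\infty\#}N_\infty$ is automatically a normal current in $l^2$; it remains to see it is not the zero current and that its support sits inside $Y_\infty=F_\infty(X_\infty)$. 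The support statement is immediate: $N_\infty$ is supported on $X_\infty$, so its push-forward is supported on $F_\infty(X_\infty)=Y_\infty$ (push-forward supports are contained in the image of the support).

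For nontriviality I would test $F_{\infty\#}N_\infty$ against a suitable $2$-form, i.e.\ exhibit bounded Lipschitz functions $g,\pi_1,\pi_2$ on $l^2$ with $F_{\infty\#}N_\infty(g\,d\pi_1\wedge d\pi_2)\ne 0$. The natural choice is to use the first two coordinates of $l^2$: let $\pi_1=\langle\cdot,e_1\rangle$, $\pi_2=\langle\cdot,e_2\rangle$, and $g\equiv 1$ localized by a cutoff. By the defining property of push-forward, $F_{\infty\#}N_\infty(1\,d\pi_1\wedge d\pi_2)=N_\infty(1\,d(\pi_1\circ F_\infty)\wedge d(\pi_2\circ F_\infty))$. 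Now observe from Construction~\ref{constr:maps_to_hilbert} that the $e_1\oplus e_2$ component of $F_i$ is exactly $F_0\circ\pi_{i,0}$ for every $i$—all the correction terms $\Psi_{\delta_i,Q}$ live in the planes $e_{2i+3}\oplus e_{2i+4}$, orthogonal to $e_1\oplus e_2$—so $\pi_1\circ F_\infty$ and $\pi_2\circ F_\infty$ are just the two coordinate functions of the isometric embedding $F_0\circ\pi_{\infty,0}:X_\infty\to e_1\oplus e_2$ composed with $\pi_{\infty,0}$. Hence $N_\infty(1\,d(\pi_1\circ F_\infty)\wedge d(\pi_2\circ F_\infty))=(F_0\circ\pi_{\infty,0})_\#N_\infty(1\,dx\wedge dy)$, which equals the total signed area $\int_{X_0}(\pi_{0})_\#(\text{stuff})$; concretely, since $N_\infty=\partial_x\wedge\partial_y\,d\mu_{X_\infty}$ and $\pi_{\infty,0}$ sends $\partial_x\wedge\partial_y$ to $\partial_x\wedge\partial_y$ with the pushed measure being Lebesgue on the unit square $X_0$ (the double-cover construction splits but preserves total Lebesgue mass at each stage), this evaluates to $\mathscr H^2(X_0)=1\ne 0$. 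Therefore $F_{\infty\#}N_\infty\ne 0$.

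The main obstacle is the bookkeeping in the last step: one must confirm that the projection $\pi_{\infty,0}$ (equivalently $\pi_{i,0}$ for each finite stage) carries the current $N_\infty$, in its $2$-vector-field representation $\partial_x\wedge\partial_y\,d\mu_{X_\infty}$, to $\partial_x\wedge\partial_y\,d\mathscr H^2\on X_0$ with \emph{no cancellation}—i.e.\ that the double-cover operation, which splits Lebesgue measure in half on $Q\adec$ and reglues, sends the two sheets down to $Q\adec$ with consistent orientation so the halves add rather than subtract. This is exactly the content of the canonical association of $N_\infty$ to $X_\infty$ in \cite[Sec.~3]{schioppa_metric_sponge} and of the fact there that $\pi_{n,m\#}N_n=N_m$, so it may be cited; but it is the one place where the specific geometry of the construction (orientation-compatibility of the covers) is essential rather than formal. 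Once that is in hand, evaluating against $1\,d\pi_1\wedge d\pi_2$ gives $\mathscr H^2(X_0)=1$, and the proof is complete.
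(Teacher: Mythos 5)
Your proposal is correct and follows essentially the same route as the paper: testing against the form $dx\wedge dy$ in the plane $e_1\oplus e_2$ amounts to computing $P_{0\#}F_{\infty\#}N_\infty(dx\wedge dy)$, and your observation that the correction terms live in planes orthogonal to $e_1\oplus e_2$ is exactly the commutativity $P_0\circ F_\infty=F_0\circ\pi_{\infty,0}$ of diagram~(\ref{eq:conv_mps_hilbert_2}), after which both arguments reduce to $\pi_{\infty,0\#}N_\infty=N_0$ and $F_{0\#}N_0(dx\wedge dy)=1$. The orientation-compatibility point you flag is indeed the content of the canonical association of $N_\infty$ to $X_\infty$ in \cite[Sec.~3]{schioppa_metric_sponge}, and citing it is exactly what the paper does.
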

%%%%%%%%%%%%%%%%%%%%%%%%%%%%%%%%%%%%%%%%%%%%%%%%%%%%%%%%%%%%%%%%%%%%%%%%%%
\begin{proof}
  As $F_\infty$ is Lipschitz (actually it is a Lipschitz embedding,
  but \emph{not} 
  biLipschitz as the biLipschitz constants of the $F_i$ degrade as
  $i\nearrow\infty$), we only have to show that
  $F_{\infty\#}N_\infty$ is nontrivial. Let $x,y$ denote the standard
  ``coordinate'' functions on $e_1\oplus e_2$, and assume that $Y_0$ is
  normalized to be a unit square in that plane. Using the
  commutativity of the diagram~(\ref{eq:conv_mps_hilbert_2}) for
  $j=\infty$ and $i=0$ we get:
  \begin{equation}
    \label{eq:nontriv_curr_hilbert_p1}
    P_{0\#}F_{\infty\#}N_\infty(dx\wedge dy) = (F_0\circ
    \pi_{\infty,0})_{\#}N_\infty(dx \wedge dy) = F_{0\#}N_0(dx\wedge dy)=1,
  \end{equation}
  where $N_0$ denotes the current associated to $X_0$, i.e.~the
  anticlockwise-oriented unit square
  with the Lebesgue measure.
\end{proof}
%%%%%%%%%%%%%%%%%%%%%%%%%%%%%%%%%%%%%%%%%%%%%%%%%%%%%%%%%%%%%%%%%%%%%%%%%%
\def\holes{\text{\normalfont Holes}}
\def\jonedec{^{(j-1)}}
\def\jdec{^{(j)}}
\def\ndec{^{(n)}}
\def\nonedec{^{(n-1)}}
\def\zerodec{^{(0)}}
\def\opoldec{\text{\normalfont o}}
\def\cpoldec{\text{\normalfont c}}
\def\apoldec{\text{\normalfont a}}
%%%%%%%%%%%%%%%%%%%%%%%%%%%%%%%%%%%%%%%%%%%%%%%%%%%%%%%%%%%%%%%%%%%%%%%%%%
\begin{thm}[$2$-unrectifiability of $Y_\infty$]
  \label{thm:unrect_hilbert}
  $Y_\infty$ is purely $2$-unrectifiable in the sense that whenever
  $K\subset\real^2$ is compact and $\Phi:K\to l^2$ is Lipschitz,
  $\hmeas 2.(\Phi^{-1}(Y_\infty)\cap K)=0$.
\end{thm}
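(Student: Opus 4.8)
The plan is to argue by contradiction: suppose $K \subset \real^2$ is compact, $\Phi : K \to l^2$ is Lipschitz, and $\hmeas 2.(\Phi^{-1}(Y_\infty)\cap K) > 0$. By the structure theory of rectifiable sets, at $\hmeas 2.$-a.e.\ point $x_0$ of $E := \Phi^{-1}(Y_\infty)\cap K$ the set $E$ has an approximate tangent plane and $\Phi$ is approximately differentiable with $\operatorname{ap}\,d\Phi_{x_0}$ injective on that plane (otherwise the image would be $2$-null, contradicting $\hmeas 2.(\Phi(E)) = \hmeas 2.(Y_\infty \cap \Phi(E))$ being relevant only up to the fact that we only need $\Phi(E)\subset Y_\infty$ has positive $\hmeas 2.$ — more precisely, we use that a Lipschitz image of a positive-measure set can be taken, after restriction, to be a biLipschitz piece by the standard decomposition, so that $Y_\infty$ would contain a biLipschitz copy of a positive-measure subset of $\real^2$). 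Thus it suffices to show: $Y_\infty$ contains no biLipschitz image of a set of positive planar measure; equivalently, for every Lipschitz $\Phi$ the preimage is null.

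The core obstruction — and where the ``holonomy'' estimate advertised in the introduction enters — is that a biLipschitz surface patch $S \subset Y_\infty$ would have to ``thread through'' the double covers $\tilde Q\adec$ at every scale. Here is the mechanism I would exploit. Fix a square $Q \in \sqfm i,X_i.$ and look at $F_\infty$ restricted to the part of $X_\infty$ lying over $Q\adec$. By Construction~\ref{constr:basic_map}, the coordinates inserted at stage $i+1$ are $\Psi_{\delta_i,Q}\otimes(e_{2i+3}\oplus e_{2i+4})$, and property \textbf{(ShSep)} together with~(\ref{eq:constr:basic_map_1}) says that two points over the same $q \in Q\adec\setminus\sigma$ lying on opposite sheets are separated in these two new coordinates by $\gtrsim \delta_i \phi(r)$, while by~(\ref{eq:constr:basic_map_3}) the total Lipschitz contribution of this stage is $\simeq \delta_i$. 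So the double cover is ``detected'' by a pair of coordinates whose oscillation is exactly of order $\delta_i$ on $\hat Q\adec$ (where $\phi \simeq l_i = 5^{-i}$). I would now run the following quantitative holonomy argument: if $\gamma$ is a loop in $X_\infty$ projecting to a small loop in $Q$ that encircles $Q\cdec$, then lifting it to the double cover forces $\chi$ to change along $\gamma$, hence $F_\infty\circ\gamma$ must travel distance $\gtrsim \delta_i l_i$ in the $(e_{2i+3},e_{2i+4})$-plane and come back — this is the nontrivial holonomy. A biLipschitz disk $S\subset Y_\infty$ of definite size contains, via $F_\infty^{-1}$, loops around the holes at every sufficiently deep scale $i$; the holonomy budget it must spend at scale $i$ is $\gtrsim \delta_i \cdot (\text{size})^2 / l_i \cdot l_i = \delta_i \cdot(\text{area contribution})$, and these contributions live in mutually orthogonal planes, so they add up. Summing over $i$ gives a lower bound proportional to $\sum_i \delta_i = \infty$ on a quantity controlled by $\glip{F_\infty}.^2 \cdot (\text{area of }S) < \infty$ (using $\sum_i\delta_i^2 < \infty$ for the Lipschitz bound but the \emph{divergence} of $\sum_i\delta_i$ for the contradiction) — a contradiction.

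Concretely, the steps are: (1) Reduce to showing $Y_\infty$ contains no biLipschitz image of a positive-measure planar set, via the area formula / rectifiability decomposition of $\Phi$. (2) Given such a hypothetical patch $S$, use $F_\infty^{-1}$ (a homeomorphism onto its image) and the inverse-system structure to locate, at each scale $i$, a definite proportion of points of $S$ sitting over the annuli $\hat Q\adec$ of generation-$i$ squares, together with small loops around the corresponding holes. (3) Quantify the holonomy: show that $F_\infty$ restricted to the double cover over $Q\adec$, measured in the stage-$i$ coordinate plane, has oscillation $\gtrsim \delta_i l_i$ on any sheet-crossing pair at radius $r \simeq l_i$, using \textbf{(ShSep)} and~(\ref{eq:constr:basic_map_1}). (4) Package this as an $L^2$-type lower bound: the normalized energy of $F_\infty$ restricted to $S$ in the orthogonal planes at scales $i=1,2,\dots$ is bounded below by $c\sum_i \delta_i \,\hmeas 2.(S)$ (each scale contributing $\gtrsim \delta_i$ times the measure, because a fixed fraction of $S$ sees a hole at that scale and must pay $\delta_i$ in holonomy). (5) On the other hand, since the planes $e_{2i+3}\oplus e_{2i+4}$ are mutually orthogonal and $S\subset Y_\infty$ is a biLipschitz $2$-surface, this total energy is $\le C\glip{\Phi^{-1}}.^2 \glip{F_\infty}.^2 \hmeas 2.(S) < \infty$. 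Since $\sum_i\delta_i = \infty$, this is impossible unless $\hmeas 2.(S) = 0$, completing the proof.

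The main obstacle I anticipate is Step (4): making the ``a fixed fraction of $S$ sees a hole at scale $i$, and each such point must pay $\delta_i$'' statement precise and additive. The subtlety is that the loops at scale $i$ need to be genuinely present inside $S$ (not just in the ambient $X_\infty$), which requires knowing that a biLipschitz image of a positive-measure set, pulled back under $F_\infty^{-1}$, cannot avoid the nontrivial topology of $X_\infty$ at deep scales — this is essentially a statement that $F_\infty^{-1}(S)$ is a ``fat'' subset of $X_\infty$ with a definite density of non-simply-connected structure, which one extracts from a Lebesgue-density argument on $\mu_{X_\infty}$ combined with the self-similar combinatorics of the square complexes $X_i$. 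The second delicate point is that the estimate~(\ref{eq:constr:basic_map_1}) only gives separation for antipodal fiber pairs ($|\theta(p_1)-\theta(p_2)| = \pi$), so one must carefully choose the loops so that they realize such antipodal crossings — this is exactly what property \textbf{(ShSep)} is designed to guarantee, provided the loops have diameter $\le 5^{-i-3}$, which forces working at scale slightly finer than the generation of the square, a bookkeeping nuisance but not a real obstruction.
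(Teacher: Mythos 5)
Your proposal correctly identifies the geometric engine --- the sheet--separation property \textbf{(ShSep)} together with the lower bound~(\ref{eq:constr:basic_map_1}), which forces antipodal fiber points over $Q\adec$ to be $\gtrsim\delta_i\phi(r)$ apart in the stage-$i$ coordinate plane --- and your Step (1) is in the spirit of the paper's reduction (though the paper needs the sharper form: after Kirchheim's Lipschitz inverse function theorem one may assume $K\subset Y_0$ and $P_0\circ\Phi=\mathrm{id}$, so that the holes to be produced live in the \emph{domain} $K$). However, the mechanism by which you extract the contradiction has a genuine gap, in two places. First, the energy bookkeeping of Steps (4)--(5) cannot close. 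The stage-$i$ contribution to $F_\infty$ is $\Psi_{\delta_i,Q}\otimes(e_{2i+3}\oplus e_{2i+4})$ with $\glip\Psi_{\delta_i,Q}.\le 7\delta_i$ by~(\ref{eq:constr:basic_map_3}), so for any Lipschitz chart of $S$ the Dirichlet ($L^2$) energy of the component of $F_\infty$ in the plane $e_{2i+3}\oplus e_{2i+4}$ is \emph{automatically} $\lesssim\delta_i^2\,\hmeas 2.(S)$; your claimed per-scale lower bound $c\,\delta_i\,\hmeas 2.(S)$ exceeds this for large $i$ and so cannot hold for any $L^2$-type quantity. What holonomy genuinely yields per scale is an $L^1$ (total-variation) bound of order $\delta_i$, but orthogonality of the planes only lets you add \emph{squared} contributions; the construction is engineered precisely so that $\sum_i\delta_i^2<\infty$ while $\sum_i\delta_i=\infty$, and any additive energy accounting consistent with $\glip F_\infty.<\infty$ sees only the convergent series. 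Second, the obstacle you flag at the end is fatal as stated: a compact set of positive $\hmeas 2.$-measure may be totally disconnected, so $S$ need not contain any loop at any scale, and there is no topology of $F_\infty^{-1}(S)$ to invoke.

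The paper circumvents both problems by replacing loops with discrete chains and the additive energy estimate with a multiplicative measure estimate. Having reduced to $K\subset Y_0$, fix a generation-$(n-1)$ square $Q$, partition $\hat Q\adec$ into $\approx 5^{i_n-n}$ thin annuli made of squares of side $\approx c\,\delta_n5^{-n}$, and suppose $K$ met the interior of every square $R_\alpha$ of one such annulus at points $p_\alpha$. An induction on $j\le n$, using~(\ref{eq:constr:basic_map_1}) against the Lipschitz constants of $\Phi$ and $F_\infty$ (this is where $c$ is chosen small), shows that consecutive images $\Phi_j(p_\alpha)$, $\Phi_j(p_{\alpha+1})$ lie in the same generation-$j$ square, hence never change sheet; traversing the annulus once then puts $\Phi_n(p_1)$ and $\Phi_n(p_t)$ on the same sheet while they are within $5^{-n-3}$ with antipodal angular coordinates, contradicting \textbf{(ShSep)}. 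So $K$ misses a square in each thin annulus; these holes have total relative measure $\ge\gamma\delta_n$ inside $Q$, and iterating over a sparse sequence of scales $k_j$ (spaced by $\approx\log(1/\delta)$ so that the holes of successive scales nest) yields $\hmeas 2.(K)\le\prod_j(1-\gamma\delta_{k_j})=0$ because $\sum_j\delta_{k_j}=\infty$. The divergence of $\sum_n\delta_n$ is thus exploited multiplicatively, through porosity at infinitely many scales, rather than additively through energy; this is the step your proposal is missing.
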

\begin{proof}
  We will argue by contradiction assuming that $K\subset
  \Phi^{-1}(Y_\infty)$ and that $\hmeas
  2.(K)>0$.
  \par\noindent\texttt{Step 1: Reduction to the case in which $\Phi$
    is a graph over $Y_0$.}
  \par Let $\Phi_n=P_n\circ \Phi$ and, using the Radon-Nikodym
  property of $l^2$, note that at each point $p\in K$
  of differentiability of $\Phi$ one has that each $\Phi_n$ is also
  differentiable at $p$ and that:
  \begin{equation}
    \label{eq:unrect_hilbert_p1}
    \lim_{n\to\infty}d\Phi_n(p) = d\Phi(p),
  \end{equation}
  where the limit is in the norm-topology of linear maps $\real^2\to
  l^2$. Following the notation of \cite[Sec.~4\&5]{ambrosio-rectifiability}, we let $J_2$ denote the Jacobian
  appearing in the area formula; by dominated convergence we then have:
  \begin{equation}
    \label{eq:unrect_hilbert_p2}
    \lim_{n\to\infty}\int_K\chi_E J_2(d\Phi_n)\,d\hmeas 2. = \int_K\chi_E J_2(d\Phi)\,d\hmeas 2.
  \end{equation}
  whenever $E\subset K$ is a Borel set. 
  \par We now consider the Borel set $E\subset K$
  consisting of those points which are Lebesgue density points of the
  set of points where $\Phi$ is differentiable and where $d\Phi_0$ has
  rank $<2$, and our goal is to show that:
  \begin{equation}
    \label{eq:unrect_hilbert_p2bis}
    \hmeas 2.(\im \Phi \cap P^{-1}_{0}(\Phi_0(E)))=0.
  \end{equation}
Note that the area formula \cite[Thm.~5.1]{ambrosio-rectifiability}
gives $\hmeas 2.(\Phi_0(E)\cap Y_0) =
0$. For each $n\ge 1$, using the square complex structure of
$\{X_i\}_{i\le n}$, the set $Y_n$ can be partitioned into finitely
many closed sets $\{S_\alpha\}_\alpha$ such that each restriction
$P_0|_{S_\alpha}:S_{\alpha}\to P_0(S_\alpha)$ is biLipschitz, thus
giving:
\begin{equation}
  \label{eq:unrect_hilbert_p3}
  \hmeas 2.(\Phi_n(E)\cap Y_n)=0.
\end{equation}
In particular, the area formula implies that:
\begin{equation}
  \label{eq:unrect_hilbert_p4}
  \int_K\chi_EJ_2(d\Phi_n)\,d\hmeas 2.=0. 
\end{equation}
Therefore, by~(\ref{eq:unrect_hilbert_p2bis}) we conclude that:
\begin{equation}
  \label{eq:unrect_hilbert_p5}
  \int_K\chi_EJ_2(d\Phi)\,d\hmeas 2.=0
\end{equation}
and then~(\ref{eq:unrect_hilbert_p2bis}) follows from the area formula.
\par Therefore by~(\ref{eq:unrect_hilbert_p2bis}) we can assume that
$d\Phi_0$ has full rank $2$ on the set of Lebesgue density points of
the set of differentiability points of $\Phi$. Using
\cite[Thm.~9]{kirchheim_metric_diff}, 
which is essentially a 
Lipschitz version of the Inverse Function Theorem, up to further partitioning $K$
and throwing away a set of null measure, we can assume that $\Phi$ is
$C$-biLipschitz and that $\Phi_0\circ\Phi = \text{Id}_{\Phi_0(K)}$. In
particular, we can assume that $K\subset Y_0$ and that $\Phi_0$ is just the
identity map.
\par\noindent\texttt{Step 2: Existence of square holes at scale $5^{-n}$.}
\par Note that the square-complex structure of $X_n$ induces a 
square-complex structure on $Y_n$ via the homeomorphism $F_n$; 
in the following for $i\ge n$ we will implicitly identify $\sqfm i,Y_n.$
with $\sqfm i, X_n.$.
\par Fix now $n$ and a square $Q\in\sqfm n-1,Y_0.$. Let $\hat Q\adec$
and $\sigma$ be as in Construction~\ref{constr:basic_map} and recall
that $\hat Q\adec$ consists of squares of $\sqfm n+2,Y_0.$.
\par We now fix a small parameter $c$ to be determined later in function of
the bi-Lipschitz constant $C$ of $\Phi$ and the Lipschitz constant of $F_\infty$. Let
\begin{equation}
  \label{eq:unrect_hilbert_p6}
  i_n = \lceil -\log_{5}(5^{-n-2}c\delta_n)\rceil
\end{equation}
and partition $\hat Q\adec$ into $\approx 5^{i_n-n}$ annuli
consisting of squares of $\sqfm i_n,Y_0.$. We consider one such an
annulus $A$. Our goal is to show that $K$ has to miss the interior of one of
the squares in $A$. 
\par We first order the squares $\{R_\alpha\}_{1\le\alpha\le t}$ of $A$ anticlockwise
so that $R_{\alpha+1}$ follows
$R_\alpha$, and $R_1$ follows $R_t$, and $R_1$ and $R_t$ meet along a
subsegment of $\sigma$. Assume that $K$ intersects each $\Inside(R_\alpha)$
and let $p_\alpha\in K\cap \Inside(R_\alpha)$. 
\par We first show that for $0\le j\le n$ the points
$\Phi_j(p_\alpha)$ and $\Phi_j(p_{(\alpha+1)\mod t})$ belong to the
same square of $\sqfm j,Y_j.$. In the following we use $\beta$ to
denote $\alpha$ or $(\alpha+1)\mod t$ and we will just write
$\alpha+1$ for $(\alpha+1)\mod t$.
\par For $j=0$ by construction $\Phi_0(p_\alpha)$ and
$\Phi_0(p_{\alpha+1})$ belong to the same square of $\sqfm 0,Y_0.$,
and for $j\ge 1$ we assume by induction that $\Phi_{j-1}(p_\alpha)$,
$\Phi_{j-1}(p_{\alpha+1})$ belong to the same $Q\jonedec_{j-1}\in\sqfm
j-1,Y_{j-1}.$. Let $Q\jdec_{j,\beta}\in\sqfm j,Y_j.$ denote the
square containing $\Phi_j(p_\beta)$ and assume by contradiction that
$Q\jdec_{j,\alpha}\ne Q\jdec_{j,\alpha+1}$. In the following we will
use the decorators ${}\adec$, ${}\odec$, ${}\cdec$ and $\hat{}$\ as in
Construction~\ref{constr:basic_map}: for example $\hat
Q_{j-1,\apoldec}\jonedec$ is obtained as $\hat Q\adec$ if we let
$Q=Q\jonedec_{j-1}$. In particular, as $Q\jdec_{j,\alpha}\ne
Q\jdec_{j,\alpha+1}$ we must have $P_{j-1}(Q\jdec_{j,\beta})\subset
Q\jonedec_{j-1,\apoldec}$. Let now $Q\zerodec_{i_n,\beta}\in\sqfm
 i_n,Q\zerodec_{n-1}.$ denote the square containing $p_\beta$, let
 $q\zerodec_\beta$ be its center, set
 $Q\jdec_{i_n,\beta}=P_0^{-1}
(Q\zerodec_{i_n,\beta})\cap
Q\jdec_{j,\alpha}$ and let $q\jdec_\beta$ denote its center.
\par As $\Phi$ is $C$-Lipschitz,
\begin{equation}
  \label{eq:unrect_hilbert_p7}
  d(\Phi_j(p_\alpha), \Phi_j(p_{\alpha+1})) \le 4C\times c5^{-n}\delta_n;
\end{equation}
as $F_j$ is $\glip F_\infty.$-Lipschitz,
\begin{equation}
  \label{eq:unrect_hilbert_p8}
  d(q_\beta\jdec, \Phi_j(p_\beta))\le 2\glip F_\infty.\times c5^{-n}\delta_n,
\end{equation}
so that:
\begin{equation}
  \label{eq:unrect_hilbert_p9}
  d(q\jdec_\alpha, q\jdec_{\alpha+1}) \le 4(C+\glip F_\infty.)\times c5^{-n}\delta_n.
\end{equation}
Let $S\jonedec_{j-1}=F^{-1}_{j-1}(Q\jonedec_{j-1})$ and
$S\jdec_{j,\beta}=F^{-1}_{j}(Q\jdec_{j,\beta})$; we must have
$S\jdec_{j,\alpha}\ne S\jdec_{j,\alpha+1}$ and
$\pi_{j-1}(F_j^{-1}(q_\beta\jdec))\in S\jonedec_{j-1,\apoldec}$. Note that
$F^{-1}_j(q\jdec_\beta)$ must be at distance $\ge 5^{-n}$ from $\partial
S\jonedec_{j-1,\apoldec}$ if $j\le n-1$ and at distance $\ge 5^{-n-3}$
if $j=n$ (in this case we use that $p_\beta\in \hat
Q_{n,\apoldec}\zerodec$), so that:
\begin{equation}
  \label{eq:unrect_hilbert_p10}
  \phi(r(F_j^{-1}(q\jdec_\beta)))\ge 5^{-n-3}.
\end{equation}
As $F_j^{-1}(q\jdec_\alpha)\ne F_j^{-1}(q\jdec_{\alpha+1})$, they belong to
different sheets of the double cover, and as
$\pi_{j-1}(S\jdec_{j,\alpha})$ and $\pi_{j-1}(S\jdec_{j,\alpha +1})$
are adjacent, we let $\hat q\jdec_\alpha$ be the center of the square of
$\sqfm i_n,Y_j.$ adjacent to $Q\jdec_{i_n,\alpha+1}$ and such that
$\pi_{j-1}(F^{-1}_j(\hat q\jdec_\alpha)) =
\pi_{j-1}(F_j^{-1}(q\jdec_\alpha))$. We now have:
\begin{align}
  \label{eq:unrect_hilbert_p11}
    r(F_j^{-1}(q\jdec_\alpha)) &= r(F_j^{-1}(\hat q\jdec_\alpha))\\
  \label{eq:unrect_hilbert_p12}
    \left|\theta(F_j^{-1}(q\jdec_\alpha)) - \theta(F_j^{-1}(\hat
      q\jdec_\alpha))\right| &= \pi,
\end{align}
and invoking~(\ref{eq:constr:basic_map_1}) we get:
\begin{equation}
  \label{eq:unrect_hilbert_p13}
  d(q\jdec_\alpha,\hat q\jdec_\alpha)\ge\frac{5^{-n-3}}{2}\delta_j\ge\frac{5^{-n-3}}{2}\delta_n.
\end{equation}
But as $\hat q\jdec_\alpha$ is the center of the  square of
$\sqfm i_n,Y_j.$ adjacent to $Q\jdec_{i_n,\alpha+1}$,
from~(\ref{eq:unrect_hilbert_p9}) we get:
\begin{equation}
  \label{eq:unrect_hilbert_p14}
  d(q\jdec_\alpha,\hat q\jdec_\alpha)\le 8(C+\glip F_\infty.)\times c5^{-n}\delta_n.
\end{equation}
Now, combining~(\ref{eq:unrect_hilbert_p14})
and~(\ref{eq:unrect_hilbert_p13}) and choosing $c\le 10^{-6}/(C+\glip
F_\infty.)$ we get a contradiction and conclude that
$Q\jdec_{j,\alpha} = Q\jdec_{j,\alpha+1}$.
\par A consequence of the previous discussion, specialized to $j=n$,
is that $\Phi_n(p_1)$ and $\Phi_n(p_t)$ belong to
the same sheet of the double cover $P_{n-1}^{-1}(\hat
Q_{n-1,\apoldec}\nonedec)\cap Y_n\to
\hat Q_{n-1,\apoldec}\nonedec$, while the choice of $c$ gives:
\begin{equation}
  \label{eq:unrect_hilbert_p15}
  d(F_n^{-1}(\Phi_n(p_1)), F_n^{-1}(\Phi_n(p_t)))\le 5^{-n-3},
\end{equation}
which contradicts~\textbf{(ShSep)}.
\par Let now $R_A$ denote the interior of the/a square of $A$ that $K$ misses. For each of
the $\approx 5^{i_n-n}$ annuli we can find such a square and
group them in a set $\holes(Q\zerodec_{n-1})$, and we have that:
\begin{equation}
  \label{eq:unrect_hilbert_p16}
  \hmeas 2.\left(\holes(Q\zerodec_{n-1})\right)\ge
  \gamma\delta_n\hmeas 2.(Q\zerodec_{n-1})
\end{equation}
for a constant $\gamma>0$ which does not depend on $n$ or
$Q\zerodec_{n-1}$. We thus conclude that 
\begin{equation}
  \label{eq:unrect_hilbert_p17}
  \hmeas 2.(K) \le\hmeas 2.\left(
    Y_0\setminus\bigcup_{Q\zerodec_{n-1}\in\sqfm n-1,Y_0.}\holes(Q\zerodec_{n-1})
    \right)\le (1-\gamma\delta_n)\hmeas 2.(Y_0).
\end{equation}
\par\noindent\texttt{Step 3: Cumulating the effects of holes and the choice
  of $\{\delta_n\}$.}
\par Let $Q_0$ denote the unique square of $\sqfm 0,Y_0.$. By
\texttt{Step 2} we have:
\begin{equation}
  \label{eq:unrect_hilbert_p18}
  \hmeas 2.(K) \le\hmeas 2.(Y_0\setminus\bigcup\holes(Q_0))\le
  (1-\gamma\delta_1)\hmeas 2.(Y_0).
\end{equation}
Now $\holes(Q_0)$ consists of squares of generation 
$<k_2=1+\lfloor G\log(1/\delta_1)\rfloor$ where $G$ is an appropriate
constant which depends on $c$ and $C$. As squares are nested,
if we apply \texttt{Step 2} on each of the squares of $\sqfm k_2,Y_0.$
which do not intersect the interior of $\bigcup\holes(Q_0)$ we get:
\begin{equation}
  \label{eq:unrect_hilbert_p19}
  \hmeas 2.(K)\le(1-\gamma\delta_1)(1-\gamma\delta_{k_2}).
\end{equation}
In general, we can reiterate, and get:
\begin{equation}
  \label{eq:unrect_hilbert_p20}
  \hmeas 2.(K)\le\prod_j(1-\gamma\delta_{k_j}),
\end{equation}
where $k_1=1$ and $k_{j+1}=k_j+\lfloor G\log(1/\delta_i)\rfloor$. If
we had 
\begin{equation}
    \label{eq:unrect_hilbert_p21}
\sum_j\delta_{k_j}=\infty
\end{equation}
we would finish obtaining the
contradiction $\hmeas 2.(K)=0$.
\par We show that~(\ref{eq:unrect_hilbert_p21}) holds if
$\delta_n=\frac{1}{10+n}$. For simplicity we assume that logarithms
are in base $10$. We use the estimate:
\begin{equation}
  \label{eq:unrect_hilbert_p22}
  \sum_{j=10^t}^{10^{t+1}}\frac{1}{j}\ge\frac{\log 10^{t+1}-\log 10^t}{16}=\frac{1}{16}.
\end{equation}
If $k_i\in(10^t,10^{t+1})$ then $k_i$ and $k_{i+1}$ are separated by a
distance $\le 23(t+1)$. Hence we have:
\begin{equation}
  \label{eq:unrect_hilbert_p23}
  \sum_{10^{t}\le k_i<10^{t+1}}\delta_{k_i}\ge\frac{1}{42(t+1)}.
\end{equation}
We thus have:
\begin{equation}
  \label{eq:unrect_hilbert_p24}
  \sum_j\delta_{k_j}\ge\lim_{T\to\infty}\sum_{t=2}^T\frac{1}{42(t+1)}=\infty.
\end{equation}
\end{proof}
%%%%%%%%%%%%%%%%%%%%%%%%%%%%%%%%%%%%%%%%%%%%%%%%%%%%%%%%%%%%%%%%%%%%%%%%%%
\section{$2$-current in $\real^4$}
\label{sec:2r4}
In $\real^4$ we have both to construct the metric spaces $X_n$ and the
embeddings as the construction cannot be self-similar.
%%%%%%%%%%%%%%%%%%%%%%%%%%%%%%%%%%%%%%%%%%%%%%%%%%%%%%%%%%%%%%%%%%%%%%%%%%
% @ def
\def\Ndec{^{(N)}}
\def\nwsqfm#1,#2.{\setbox1=\hbox{$#1$\unskip}\setbox2=\hbox{$#2$\unskip}\text{\normalfont
    Sq}_{\ifdim\wd1>0pt #1\fi}({\ifdim\wd2>0pt #2\else X_1\fi}) }
\def\dirset#1.{\setbox1=\hbox{$#1$\unskip}\text{\normalfont
    Th}({\ifdim\wd1>0pt #1\else 1\fi})}
\def\dirnset#1.{\setbox1=\hbox{$#1$\unskip}\text{\normalfont
    Th}_0({\ifdim\wd1>0pt #1\else 1\fi})}
\def\RadN#1.{\setbox1=\hbox{$#1$\unskip}\text{\normalfont
    RN}({\ifdim\wd1>0pt #1\else 1\fi})}
\def\oskel#1.{\setbox1=\hbox{$#1$\unskip}\text{\normalfont
    Sk}_1({\ifdim\wd1>0pt #1\else X_1\fi})}
\def\addec{\text{\normalfont ad}}
\def\cmass#1.{{\|#1\|}}%current mass
\def\PAR{\text{\normalfont PAR}}
%%%%%%%%%%%%%%%%%%%%%%%%%%%%%%%%%%%%%%%%%%%%%%%%%%%%%%%%%%%%%%%%%%%%%%%%%%
\begin{constr}[2-Normal current in $\real^4$]
  \label{constr:r4}
\par\noindent\texttt{Step 1: Affine approximation of $\Psi_\delta$.}
\par Let $Q$, $\tilde Q$, $\Psi_\delta$, etc...~be as in
Construction~\ref{constr:basic_map}. Thee maps $h_1$, $h_2$ and $\phi$
are piecewise-affine, while $\theta$ and $r$, which are defined on
$\Sigma$, are not so. However, by taking iterated subdivisions of $Q$
and $\tilde Q$, we can approximate $\theta$ and $r$ by maps which are
affine on each square of $\Sigma\Ndec$; letting $N\to \infty$ one can
take the approximations as close as one wants in the uniform topology
while keeping the Lipschitz constants bounded. Thus, there are an
$N\in\natural$, independent of $\delta$, and a piecewise-affine map
\begin{equation}
  \label{eq:constr_r4_1}
  \Phi_\delta:\tilde Q\Ndec\to\real^2
\end{equation}
such that the corresponding of~(\ref{eq:constr:basic_map_1}),
(\ref{eq:constr:basic_map_2}) and (\ref{eq:constr:basic_map_3}) hold:
 \begin{align}
   \label{eq:constr_r4_3}
   \glip
   \Phi_\delta.&\in\bigl[\frac{\delta}{16},23{\delta}\bigr]\\
   \label{eq:constr_r4_4}
   \|\Phi_\delta(p_1)-\Phi_\delta(p_2)\|_{\real^2}&\ge\frac{\delta}{3}\phi(r(p_1))\\
   \label{eq:constr_r4_5}
   \|\Phi_\delta\|_{\real^2}&\le 2\delta\diam Q.
 \end{align}
\par\noindent\texttt{Step 2: Construction of $F_1$.}
\par Let $X_0=[0,1]^2$ and $F_0:X_0\to e_1\oplus e_2\subset\real^4$ be
the standard isometric embedding; $X_1$ is obtained by applying to
$X_0$ Construction~\ref{constr:basic_map} as in the $l^2$-case and
then we let:
\begin{equation}
  \label{eq:constr_r4_6}
  F_1 = F_0\circ\pi_{1,0}+\Phi_{\delta_1}\otimes(e_3\oplus e_4).
\end{equation}
Note that we have bounds on the Lipschitz constant of $F_1$:
\begin{equation}
  \label{eq:constr_r4_7}
  \glip F_1.\in\bigl[\frac{(1+\delta_1^2)^{1/2}}{16}, 23(1+\delta_1^2)^{1/2}\bigr]
\end{equation}
and that because of~(\ref{eq:constr_r4_4}) $F_1$ is a topological
embedding, being injective. Let $\nwsqfm ,.$ denote the set of squares of $X_1$ and let
$Y_1=F_1(X_1)$. As $F_1$ is piecewise affine, each $Q\in\nwsqfm ,.$
determines a unique affine $2$-plane $\tau(Q)\subset\real^4$ which
 contains $F_1(Q)$; the corresponding unique $2$-plane parallel to
 $\tau(Q)$ and passing through the origin will be denoted by
 $\tau_0(Q)$; we finally let:
 \begin{align}
   \label{eq:constr_r4_8}
   \dirset .&= \bigcup_{Q\in \nwsqfm ,.}\tau(Q)\\
   \label{eq:constr_r4_9}
   \dirnset .&= \bigcup_{Q\in \nwsqfm ,.}\tau_0(Q),
 \end{align}
 and note that both sets are finite.
\par\noindent\texttt{Step 3: The Radial Basis Neighbourhood.}
\par For $Q\in\nwsqfm ,.$ we let $\pi_{\tau(Q)}$ denote the orthogonal
projection onto $\tau(Q)$ and define the \textbf{radial-basis function}:
\begin{equation}
  \label{eq:constr_r4_10}
  \varphi_Q(x) = 
  \begin{cases}
    \exp\left(
      -\frac{\sigma_1}{\dist(\pi_{\tau(Q)}(x), F_1(\partial Q))}
    \right)\times 46\diam(F_1(Q))&\text{if
      $\pi_{\tau(Q)}\in\Inside(F_1(Q))$}\\
    0&\text{otherwise,}
  \end{cases}
\end{equation}
where $\sigma_1>0$ is a parameter to be chosen later. We then define
the \textbf{radial basis neighbourhood}:
\begin{multline}
  \label{eq:constr_r4_11}
  \RadN . = \biggl\{
  p\in\real^4:\text{there is a $Q\in\nwsqfm ,.$}: p = x+y, x\in
  F_1(Q),\\
  y\perp
  \tau(Q), \text{and $\|y\|\le\varphi_Q(x)$}
\biggr\}.
\end{multline}
$\RadN .$ is not a neighbourhood of $Y_1$ as about each point of
$F_1(\partial Q)$ it has empty interior; however, it is close to being a
neighbourhood of $Y_1$ as it contains a neigbhbourhood of:
\begin{equation}
  \label{eq:constr_r4_12}
  \bigcup_{Q\in\nwsqfm ,X_1.}\Inside(F_1(Q)).
\end{equation}
We define $P_1:\RadN .\to Y_1$ by $p=x+y\mapsto x$. Note that if
$\sigma_1$ is sufficiently large $P_1$ is well-defined
(see Lemma~\ref{lem:claim_j}), and that:
\begin{description}
\item[(Claim1)] For each $\varepsilon_1>0$ there is a $\sigma_1>0$
  such that $P_1$ is $(1+\varepsilon_1)$-Lipschitz. 
\end{description}
\par\noindent\texttt{Step 4: The adaptative subdivision of $X_1$ and
  the construction of $X_2$.}
\par Let $\oskel.$ denote the $1$-skeleton of $X_1$ (i.e.~the union of
$1$-and-$0$-dimensional cells) and $\nwsqfm k,X_1.$ the set of squares
obtained by subdividing the squares of $\nwsqfm ,.$ $k$-times (i.e.~we
get $5^{2k}$-isometric subsquares from each $Q\in\nwsqfm ,.$). Let
\begin{equation}
  \label{eq:constr_r4_r13}
  \nwsqfm \infty,. = \bigcup_{k\ge 1}\nwsqfm k,.;
\end{equation}
we say that $Q\in \nwsqfm \infty,.$ is \textbf{adapted} to $\RadN.$ if
the $(23\delta_1\diam F_1(Q))$-neighborhood of $F_1(Q)$ is contained
in $\RadN.$ and if, denoting by $\PAR(Q)\in\nwsqfm ,.$ the unique
square containing $Q$, one has:
\begin{equation}
  \label{eq:tritanomaly-aug-add1}
  \max_{x\in Q}\dist(x,\partial Q) \le \delta_1\max_{x\in Q}\dist(x,\partial\PAR(Q)).
\end{equation}
Now the set of adapted squares is partially ordered by
inclusion and we let $\nwsqfm\addec,.$ denote the set of its maximal
elements. Note that the elements of $\nwsqfm\addec,.$ must have
pairwise disjoint interia and:
\begin{equation}
  \label{eq:constr_r4_r14}
  X_1\setminus\oskel . = \bigcup_{Q\in\nwsqfm\addec,.}Q.
\end{equation}
We obtain $X_2$ from $X_1$ by applying
Construction~\ref{constr:basic_map} to each $Q\in\nwsqfm\addec, X_1.$,
and subdividing the resulting squares $N$-times as in \texttt{Step 1}. Now
$X_2$ is not a square complex, but it is almost so. First, $X_2$ is
the limit on an admissible inverse system in the sense of
Definition~3.1 in~\cite{schioppa_metric_sponge}. As on $X_0$ and $X_1$ we
considered the canonical measures constructed in
Section~\ref{sec:2hilb}, we obtain a canonical measure $\mu_2$ on
$X_2$ so that $(X_2,\mu_2)$ is a $(1,1)$-PI space (see Theorem~3.8 in
\cite{schioppa_metric_sponge}). As the metric on $X_2$ we will
consider the length metric and we observe that $X_2$ is doubling with
doubling constant $\le 15$. We also obtain a $1$-Lipschitz map
$\pi_{2,1}:X_2\to X_1$ as the inverse limit system associated to $X_2$
is built on top of $X_1$. By Theorem~3.20 in
\cite{schioppa_metric_sponge} we obtain a $2$-dimensional simple
normal current $N_2$ with $\cmass N_2.=\mu_2$ and $\pi_{2,1\#}N_2 =
N_1$, $N_1$ being the canonical normal current associated to $X_1$.
\par Second $\oskel .$ embedds isometrically in $X_2$ and, away from
$\oskel .$, $X_2$ has a square complex structure. In fact, each
$Q\in\nwsqfm\addec, X_1.$ gives rise to at most $10\times 5^{N+7}$
squares in $X_2$; we thus denote the set of such squares by $\nwsqfm,
X_2.$ and let:
\begin{equation}
   \label{eq:constr_r4_r15}
   \oskel X_2. = \oskel X_1.\cup\bigcup_{Q\in\nwsqfm,X_2.}\oskel Q..
\end{equation}
\par\noindent\texttt{Step 5: The construction of $F_2$.}
\par To get $X_2$ we have applied to each $Q\in\nwsqfm\addec, X_1.$
Construction~\ref{constr:basic_map} and we have further subdivided
$N$-times the squares of the branched cover $\tilde Q\to Q$ so that we
can define $\Phi_{Q,\delta_2}:\tilde Q\to\real^2$ as in \texttt{Step
  1}. However, we need a bit extra care to get finitely many
possibilities for the tangent space of $Y_2$: this will be useful in
the proof of Lemma~\ref{lem:claim_j}.
\par First, for $Q_1\ne Q_2\in\nwsqfm\addec ,X_1.$ the maps
$\Phi_{Q_1,\delta_2}$ and $\Phi_{Q_2,\delta_2}$ can be taken to be the same
up to composition with translations and dilations. Second, each
$Q\in\nwsqfm\addec, X_1.$ belongs to a unique parent
$\PAR(Q)\in\nwsqfm,.$. As $\dirnset .$ is finite, we can choose a
finite set of pairs $\{(e_{1,Q},e_{2,Q})\}_{Q\in\nwsqfm\addec, .}$
such that each pair $(e_{1,Q},e_{2,Q})$ is an orthonormal basis
of the $2$-plane orthogonal to $\tau_0(\PAR(Q))$. We let:
\begin{equation}
  \label{eq:constr_r4_r16}
  F_2(x) = F_1\circ\pi_{2,1}(x) +
  \sum_{Q\in\nwsqfm\addec,.}\Phi_{Q,\delta_2}(x)\otimes(e_{1,Q}\oplus e_{2,Q}),
\end{equation}
and observe that by~(\ref{eq:constr_r4_4}) $F_2$ is a topological
embedding. As $X_2$ is a length space and as $\Phi_{Q,\delta_2}$ adds
a contribution to the gradient of $F_1$ orthogonally to
$\tau_0(\PAR(Q))$, we get:
\begin{equation}
  \label{eq:constr_r4_r17}
  16^{-1}(1+\delta_1^2+\delta_2^2)^{1/2}\le\glip F_2.\le 23(1+\delta_1^2+\delta_2^2)^{1/2},
\end{equation}
and we also have:
\begin{equation}
  \label{eq:constr_r4_r18}
  \|F_1\circ\pi_{2,1}-F_2\|_\infty\le 56\times 5^{-2}\delta_2.
\end{equation}
\par Let $Y_2=F_2(X_2)$ and note that $F_2$ is affine when restricted
to each $Q\in\nwsqfm, X_2.$. We let $\tau(F_2(Q))$ denote the affine
$2$-plane containing $F_2(Q)$ and $\tau_0(F_2(Q))$ the corresponding
$2$-plane passing through the origin. We finally let
\begin{align}
   \label{eq:constr_r4_19}
   \dirset 2.&= \bigcup_{Q\in \nwsqfm ,X_2.}\tau(Q)\\
   \label{eq:constr_r4_20}
   \dirnset 2.&= \bigcup_{Q\in \nwsqfm ,X_2.}\tau_0(Q),
 \end{align}
and note that $\dirnset 2.$ is finite by the choice of
$\{(e_{1,Q},e_{2,Q})\}_{Q\in\nwsqfm\addec, .}$ (while $\dirset 2.$ is
not finite). By construction we also have the commutative diagram:
  \begin{equation}
    \label{eq:constr_r4_21}
    \xy
    (0,40)*+{X_2} = "x2"; (20,40)*+{Y_2} = "y2";
    (0,20)*+{X_1} = "x1"; (20,20)*+{Y_1} = "y1";
    (0,0)*+{X_0} = "x0"; (20,0)*+{Y_0} = "y0";
    {\ar "x2"; "y2"}?*!/_2mm/{F_2};
    {\ar "x1"; "y1"}?*!/_2mm/{F_1};
    {\ar "x0"; "y0"}?*!/^2mm/{F_0};
    {\ar "x2"; "x1"}?*!/^4mm/{\pi_{2,1}};
    {\ar "x1"; "x0"}?*!/^4mm/{\pi_{1,0}};
    {\ar "y2"; "y1"}?*!/_4mm/{P_1};
    {\ar "y1"; "y0"}?*!/_4mm/{P_0};
    \endxy
  \end{equation}
\par\noindent\texttt{Step 6: The general iteration.}
\par Assume we have constructed $\{X_k\}_{k\le j}$, $\{\RadN
k.\}_{k\le j-1}$ and $\{F_k\}_{k\le j}$; for $Q\in\nwsqfm, X_j.$ we
define the \textbf{radial basis function}:
\begin{equation}
  \label{eq:constr_r4_22}
  \varphi_Q(x) = 
  \begin{cases}
    \exp\left(
      -\frac{\sigma_j}{\dist(\pi_{\tau(Q)}(x), F_j(\partial Q))}
    \right)\times 46\diam(F_j(Q))&\text{if
      $\pi_{\tau(Q)}\in F_j(\Inside(Q))$}\\
    0&\text{otherwise,}
  \end{cases}
\end{equation}
where $\sigma_j>0$ is a parameter to be chosen later. We then define
the \textbf{radial basis neighbourhood}:
\begin{multline}
  \label{eq:constr_r4_23}
  \RadN j. = \biggl\{
  p\in\real^4:\text{there is a $Q\in\nwsqfm ,X_j.$}: p = x+y, x\in
  F_j(Q),\\
  y\perp
  \tau(Q), \text{and $\|y\|\le\varphi_Q(x)$}
\biggr\}.
\end{multline}
As for $\RadN 1.$, $\RadN j.$ is not a neighbourhood of $Y_j$ but it
is a neighbourhood of
\begin{equation}
  \label{eq:constr_r4_24}
  \bigcup_{Q\in\nwsqfm ,X_j.}\Inside(F_j(Q)).
\end{equation}
We define $P_j:\RadN j.\to Y_j$ by $p=x+y\mapsto x$ and will later
show that if
$\sigma_j$ is sufficiently large, $P_j$ is well-defined
(see Lemma~\ref{lem:claim_j}), and that:
\begin{description}
\item[(Claim $j$)] For each $\varepsilon_j>0$ there is a $\sigma_j>0$
  such that $P_j$ is $(1+\varepsilon_j)$-Lipschitz. 
\end{description}
We then define as above:
\begin{equation}
  \label{eq:constr_r4_r25}
  \nwsqfm \infty, X_j. = \bigcup_{k\ge 1}\nwsqfm k, X_j.;
\end{equation}
we say that $Q\in \nwsqfm \infty, X_j.$ is \textbf{adapted} to $\RadN j.$ if
the $(23\delta_j\diam F_j(Q))$-neighborhood of $F_j(Q)$ is contained
in $\RadN j.$ and if, denoting by $\PAR(Q)\in\nwsqfm ,.$ the unique
square containing $Q$, one has:
\begin{equation}
  \label{eq:tritanomaly-aug-add2}
  \max_{x\in Q}\dist(x,\partial Q) \le \delta_j\max_{x\in Q}\dist(x,\partial\PAR(Q)).
\end{equation} As above we let $\nwsqfm\addec,X_j.$ be the set of
maximal adapted squares, which must then have pairwise disjoint interia
and satisfy:
\begin{equation}
  \label{eq:constr_r4_r26}
  X_j\setminus\oskel X_j. = \bigcup_{Q\in\nwsqfm\addec,X_j.}Q.
\end{equation}
We obtain $X_{j+1}$ from $X_j$ by applying
Construction~\ref{constr:basic_map} to each $Q\in\nwsqfm\addec,X_j.$
and subdividing the obtained squares other $N$-times. As discussed
above, $X_{j+1}$ is not a square complex, but it is almost so. In
fact, $X_{j+1}$ is the limit of an admissible inverse system in the
sense of Definition~3.1 of \cite{schioppa_metric_sponge}. We get a
$1$-Lipschitz map $\pi_{j+1,j}:(X_{j+1},\mu_{j+1})\to(X_j,\mu_j)$ and
$X_{j+1}$ is a doubling length space with doubling constant
$\le 50$ (the projection of a square of $\nwsqfm\addec,X_j.$ contains at most 
$50$ squares of $1/5$-the side length). As in \texttt{Step 4} we find that to $X_{j+1}$ is
canonically associated a normal metric current $N_{j+1}$ with
$\pi_{j+1,j\#}N_{j+1}=N_j$ and $\cmass N_{j+1}.=\cmass N_j.$. We let
$\nwsqfm ,X_{j+1}.$ be the corresponding set of squares of $X_{j+1}$,
which has a square-complex structure away from:
\begin{equation}
  \label{eq:constr_r4_r27}
  \oskel X_{j+1}.=\bigcup_{k\le j}\oskel
  X_j.\cup\bigcup_{Q\in\nwsqfm,X_{j+1}.}\oskel Q.;
\end{equation}
note also that:
\begin{equation}
  \label{eq:constr_r4_r28}
  X_{j+1}\setminus\bigcup_{k\le j}\oskel X_j.=\bigcup_{Q\in\nwsqfm,X_{j+1}.}Q.
\end{equation}
To construct $F_{j+1}$ we proceed as for $F_2$: for
$Q\in\nwsqfm\addec,X_j.$ we choose $\Phi_{Q,\delta_{j+1}}:\tilde
  Q\to\real^2$ such that for $Q_1\ne Q_2$ the maps
  $\Phi_{Q_1,\delta_{j+1}}$ and $\Phi_{Q_2,\delta_{j+1}}$ can be taken
  to differ up to composition with translations and
  dilations. Secondly, each $Q\in\nwsqfm\addec,X_j.$ belongs to a
  unique parent $\PAR(Q)\in\nwsqfm,X_j.$ and $\dirnset j.$ is
  finite. Thus we can choose a finite set of pairs
  $\{(e_{1,Q},e_{2,Q})\}_{Q\in\nwsqfm\addec,X_j.}$ such that each
  $(e_{1,Q},e_{2,Q})$ is an orthonormal basis of the orthogonal
  complement of $\tau_0(\PAR(Q))$. We define:
\begin{equation}
  \label{eq:constr_r4_r29}
  F_{j+1}(x) = F_j\circ\pi_{j+1,j}(x) +
  \sum_{Q\in\nwsqfm\addec,X_j.}\Phi_{Q,\delta_{j+1}}(x)\otimes(e_{1,Q}\oplus e_{2,Q}),
\end{equation}
and observe that by~(\ref{eq:constr_r4_4}) $F_{j+1}$ is a topological
embedding. As $X_{j+1}$ is a length space and as $\Phi_{Q,\delta_{j+1}}$ adds
a contribution to the gradient of $F_j$ orthogonally to
$\tau_0(\PAR(Q))$, we get:
\begin{equation}
  \label{eq:constr_r4_r30}
  16^{-1}(1+\sum_{l=1}^{j+1}\delta_l^2)^{1/2}\le\glip F_{j+1}.\le 23(1+\sum_{l=1}^{j+1}\delta_l^2)^{1/2},
\end{equation}
and we also have:
\begin{equation}
  \label{eq:constr_r4_r31}
  \|F_{j}\circ\pi_{j+1,j}-F_{j+1}\|_\infty\le 56\times 5^{-j}\delta_{j+1}.
\end{equation}
Let $Y_{j+1}=F_{j+1}(X_{j+1})$ and note that $F_{j+1}$ is affine when
restricted to each $Q\in\nwsqfm ,X_{j+1}.$; as in \texttt{Step 5} we
define $\tau(F_{j+1}(Q))$, $\tau_0(F_{j+1}(Q))$, $\dirset j+1.$ and
$\dirnset j+1.$, and observe that $\dirnset j+1.$ is finite.
\par Finally for $j\le k$ one has the following commutative diagrams:
  \begin{equation}
    \label{eq:constr_r4_32}
    \xy
    (0,20)*+{X_{j+1}} = "xjone"; (20,20)*+{Y_{j+1}} = "yjone";
    (0,0)*+{X_k} = "xk"; (20,0)*+{Y_k} = "yk";
    {\ar "xjone"; "yjone"}?*!/_2mm/{F_{j+1}};
    {\ar "xk"; "yk"}?*!/_2mm/{F_k};
    {\ar "xjone"; "xk"}?*!/^7mm/{\pi_{j+1,k}};
    {\ar "yjone"; "yk"}?*!/_16mm/{P_k\circ P_{k-1}\circ\cdots\circ P_j};
    \endxy
  \end{equation}
\end{constr}
%%%%%%%%%%%%%%%%%%%%%%%%%%%%%%%%%%%%%%%%%%%%%%%%%%%%%%%%%%%%%%%%%%%%%%%%%%
\begin{lem}[Convergence of the spaces and currents]
  \label{lem:convergence}
  The metric measure spaces $(X_n,\mu_n)$ converge in the mGH-sense to
  $(X_\infty,\mu_\infty)$; having arranged convergence in a container,
  the normal currents $N_n$ converge weakly to a normal current
  $N_\infty$ supported in $X_\infty$ with $\cmass
  N_\infty.=\mu_\infty$; the maps $\pi_{n,i}:X_n\to X_i$ also converge
  to $1$-Lipschitz maps $\pi_{\infty,i}:X_\infty\to X_i$ as
  $n\nearrow\infty$ and, for each pair $l<i$, one has commutative diagrams:
  \begin{equation}
    \label{eq:convergence_s1}
    \xy
    (0,20)*+{(X_{\infty},\mu_\infty, N_\infty)} = "xinf"; 
    (40,20)*+{(X_i,\mu_i,N_i)} = "xi";
    (0,0)*+{(X_l,\mu_l,N_l)} = "xl"; 
    {\ar "xinf"; "xi"}?*!/_2mm/{\pi_{\infty,i}};
    {\ar "xinf"; "xl"}?*!/^5mm/{\pi_{\infty,l}};
    {\ar "xi"; "xl"}?*!/_3mm/{\pi_{i,l}};
    \endxy
  \end{equation}
\end{lem}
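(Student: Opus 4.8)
The plan is to assemble the conclusion from the uniform estimates recorded in Construction~\ref{constr:r4} together with the general machinery for admissible inverse systems from \cite{schioppa_metric_sponge}. First I would address mGH-convergence of $(X_n,\mu_n)$. Since each $X_{n+1}$ is the inverse limit of an admissible inverse system built on top of $X_n$, the projections $\pi_{n+1,n}$ are $1$-Lipschitz and, by construction of the canonical measures, $\pi_{n+1,n\#}\mu_{n+1}=\mu_n$; moreover the geometry is altered only inside the squares of $\nwsqfm\addec,X_n.$, where the modification has diameter controlled by $\delta_{n+1}$ times the diameter of the affected square, which is itself at most of order $5^{-n}$. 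Hence $d_{GH}(X_{n+1},X_n)\lesssim 5^{-n}\delta_{n+1}\to 0$ summably, so $\{X_n\}$ is Cauchy in the GH-metric and converges to a compact length space $X_\infty$; the measure-side estimate $\|F_{j}\circ\pi_{j+1,j}-F_{j+1}\|_\infty\le 56\cdot 5^{-j}\delta_{j+1}$ (\ref{eq:constr_r4_r31}), combined with mass-preservation $\cmass N_{j+1}.=\cmass N_j.$, shows the $\mu_n$ are uniformly bounded and converge weak* (after isometrically embedding everything in a common container $Z$, as in Subsec.~\ref{subsec:conventions}) to a Radon measure $\mu_\infty$ on $X_\infty$. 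The identity $\cmass N_\infty.=\mu_\infty$ will be inherited in the limit once $N_\infty$ is constructed.

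Next I would construct $N_\infty$ and verify it is normal. The currents $N_n$ are $2$-dimensional normal metric currents with uniformly bounded mass ($\cmass N_n.(X_n)=\mu_n(X_n)=\mu_0(X_0)=1$) and uniformly bounded boundary mass: since $\partial N_{n+1}$ is the push-forward of $\partial N_n$ under the inverse-limit construction (each application of Construction~\ref{constr:basic_map} glues the branched cover back along its boundary, so it does not create new boundary), $\cmass\partial N_n.$ equals $\cmass\partial N_0.$, the length of $\partial X_0$. By the compactness theorem for metric normal currents in a fixed compact container \cite{ambrosio-kirch}, a subsequence of $N_n$ converges weakly to a normal current $N_\infty$; the commutativity $\pi_{n+1,n\#}N_{n+1}=N_n$ together with the uniform convergence $\pi_{n,i}\to\pi_{\infty,i}$ forces all subsequential limits to agree, so the full sequence converges. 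That $N_\infty$ is supported in $X_\infty$ follows because $\operatorname{supp}N_n\subset X_n$ and $X_n\to X_\infty$ in the Hausdorff sense inside $Z$. Lower semicontinuity of mass gives $\cmass N_\infty.\le\liminf\cmass N_n.=1$; the reverse inequality, hence $\cmass N_\infty.=\mu_\infty$, comes from testing against the coordinate forms pulled back via $\pi_{\infty,0}$ exactly as in the proof of Lemma~\ref{lem:nontriv_curr_hilbert}, which also yields nontriviality.

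For the maps $\pi_{n,i}$: fixing $i$, the sequence $\{\pi_{n,i}\}_{n\ge i}$ consists of $1$-Lipschitz maps and, by the telescoping estimate $d_{X_i}(\pi_{n+1,i}\circ\pi_{\infty,n+1},\pi_{n,i}\circ\pi_{\infty,n})\lesssim 5^{-n}\delta_{n+1}$ coming from (\ref{eq:constr_r4_r31}) and the way $\pi_{n+1,n}$ differs from the identity only on the adapted squares, the compositions $\pi_{n,i}$ converge (in the weak* sense of Definition~\ref{defn:weak*conv}, with the varying source spaces $X_n\to X_\infty$) to a $1$-Lipschitz map $\pi_{\infty,i}:X_\infty\to X_i$. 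The diagrams (\ref{eq:convergence_s1}) for the spaces pass to the limit from the already-established relations $\pi_{n,l}=\pi_{i,l}\circ\pi_{n,i}$; the corresponding identities for the measures ($\pi_{\infty,i\#}\mu_\infty=\mu_i$) and the currents ($\pi_{\infty,i\#}N_\infty=N_i$) follow by continuity of push-forward under weak* / weak convergence. The main obstacle I anticipate is the bookkeeping needed to justify that the modifications in passing from $X_n$ to $X_{n+1}$ are genuinely confined, both metrically and measure-theoretically, to the adapted squares with the stated $5^{-n}\delta_{n+1}$ control — i.e.\ checking that the length-metric on $X_{n+1}$ agrees with that on $X_n$ outside a controlled neighborhood of $\bigcup_{Q\in\nwsqfm\addec,X_n.}Q$, and that the canonical-measure and canonical-current constructions of \cite{schioppa_metric_sponge} are compatible with the inverse limit so that mass and boundary mass are exactly preserved; everything else is a routine application of the metric compactness theorem and passage to the limit in commutative diagrams.
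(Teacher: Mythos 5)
Your proposal is correct and follows essentially the same route as the paper, which disposes of this lemma in two sentences by declaring it routine for inverse limits of admissible inverse systems and deferring to the arguments of \cite[Sec.~3]{schioppa_metric_sponge}; your write-up simply fills in the standard details (summable GH-distances, mass and boundary-mass preservation, the compactness theorem for normal currents, and uniqueness of the limit via the compatible projections). The only minor quibble is that (\ref{eq:constr_r4_r31}) controls the extrinsic embeddings $F_j$ rather than the intrinsic metrics, but the intrinsic estimate you actually need (fibers of $\pi_{n+1,n}$ have diameter $\lesssim 5^{-n}$, since the modification is confined to the adapted squares) is exactly the one you state in words.
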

%%%%%%%%%%%%%%%%%%%%%%%%%%%%%%%%%%%%%%%%%%%%%%%%%%%%%%%%%%%%%%%%%%%%%%%%%%
\begin{proof}
  The proof is routine as $(X_\infty,\mu_\infty)$ is an inverse limit
  of the metric measure spaces $(X_k,\mu_k)$. Even though here we work
  with a slightly more general cube complexes (in $X_k$ we allow cells of
  of different diameters), the same arguments as in
  \cite[Sec.~3]{schioppa_metric_sponge} go through.
\end{proof}
%%%%%%%%%%%%%%%%%%%%%%%%%%%%%%%%%%%%%%%%%%%%%%%%%%%%%%%%%%%%%%%%%%%%%%%%%%
\begin{lem}[Proof of (\textbf{Claim}$j$)]
  \label{lem:claim_j}
  If the $\delta_k$'s are chosen  so that:
  \begin{equation}
    \label{eq:claim_j_s1}
    4\cdot 10^3\left(1+\sum_{k\ge1}\delta_k^2\right)^{1/2}\left(\sum_{k\ge1}\delta_k^2\right)<\frac{1}{8},
  \end{equation}
  then {\normalfont(\textbf{Claim} $j$)} holds.
\end{lem}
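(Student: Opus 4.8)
The plan is to show that, under the smallness condition \eqref{eq:claim_j_s1}, the map $P_j$ is well-defined and $(1+\varepsilon_j)$-Lipschitz by choosing $\sigma_j$ large. The key point is that $P_j$ is the ``nearest-point'' retraction $p=x+y\mapsto x$ off the union of the fibers over the (finitely many affinely embedded) squares $Q\in\nwsqfm ,X_j.$, and the obstruction to it being well-defined and almost-$1$-Lipschitz is that two fibers, over squares $Q_1,Q_2$ carrying nearly-parallel tangent planes $\tau(Q_1),\tau(Q_2)$, might come close and ``cross.'' First I would record that there are only finitely many directions in $\dirnset j.$, so the angle between two distinct planes $\tau_0(Q_1)$ and $\tau_0(Q_2)$ is either $0$ (parallel) or bounded below by some $\alpha_0=\alpha_0(j)>0$. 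The radial-basis function $\varphi_Q$ decays like $\exp(-\sigma_j/\dist(\cdot,F_j(\partial Q)))\times 46\diam(F_j(Q))$, so as $\sigma_j\to\infty$ the fiber over $Q$ shrinks \emph{superpolynomially} near $\partial F_j(Q)$ while staying of size $\lesssim\diam(F_j(Q))$ in the interior. The role of \eqref{eq:claim_j_s1} is to control, via \eqref{eq:constr_r4_r30} and \eqref{eq:constr_r4_r31}, how far $Y_{j'}$ for $j'>j$ (and ultimately $Y_\infty$) can wander off $Y_j$ transversally: the total $\ell^2$-mass of the $\delta$'s bounds the tilt of the tangent planes and the $C^0$-drift, so the ``tube'' $\RadN j.$ of radius governed by $\varphi_Q$ is wide enough to contain everything built on top of $Q$ yet thin enough that the retraction does not distort.

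The argument then splits into three steps. \texttt{Step A (well-definedness).} Given $p=x_1+y_1=x_2+y_2$ with $x_i\in F_j(Q_i)$, $y_i\perp\tau(Q_i)$, $\|y_i\|\le\varphi_{Q_i}(x_i)$, I want $x_1=x_2$. If $Q_1=Q_2$ this is immediate since $y\mapsto x$ is the orthogonal decomposition along one plane. If $Q_1\ne Q_2$ but $\tau_0(Q_1)=\tau_0(Q_2)$ (parallel translates), the two affine planes are at a fixed positive distance unless they share a boundary segment, and near a shared boundary segment $\varphi_{Q_i}$ vanishes to infinite order, so for $\sigma_j$ large the fibers are disjoint except possibly on $F_j(\partial Q_1)\cap F_j(\partial Q_2)$, where $y_i=0$ and hence $p=x_1=x_2$. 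If the planes make angle $\ge\alpha_0$, then $\|x_1-x_2\|=\|y_2-y_1\|\le\varphi_{Q_1}(x_1)+\varphi_{Q_2}(x_2)$, but on the other hand the component of $x_1-x_2$ orthogonal to $\tau_0(Q_1)$ has norm $\ge$ (something like) $\sin\alpha_0$ times $\dist(x_2,\tau(Q_1))\ge\dist(x_2,Y_j)$ minus lower-order terms — and this forces $x_2$ to be within $O(\varphi)$ of $\partial F_j(Q_2)$, where $\varphi_{Q_2}(x_2)$ is itself superpolynomially small; iterating / taking $\sigma_j$ large kills it, giving $x_1=x_2$. \texttt{Step B (Lipschitz estimate).} Take $p,p'\in\RadN j.$ with $p=x+y$, $p'=x'+y'$; I need $\|x-x'\|\le(1+\varepsilon_j)\|p-p'\|$. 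Write $p-p'=(x-x')+(y-y')$. If $x,x'$ lie in the same $F_j(Q)$ this is exactly the $1$-Lipschitz orthogonal projection, so $\varepsilon_j$ isn't even needed. The loss of $1$ occurs precisely when $x\in F_j(Q)$, $x'\in F_j(Q')$ with $Q\ne Q'$: here the cheap bound is $\|x-x'\|\le\|p-p'\|+\|y\|+\|y'\|$, and the whole game is to absorb $\|y\|+\|y'\|$ into $\varepsilon_j\|p-p'\|$. This is where I use that $p,p'$ being in \emph{distinct} fibers forces $\|p-p'\|$ to be comparable to $\dist(x,F_j(\partial Q))+\dist(x',F_j(\partial Q'))$ (roughly, one must travel to a common boundary region to switch squares), while $\|y\|\le\varphi_Q(x)\le 46\diam(F_j(Q))\exp(-\sigma_j/\dist(x,F_j(\partial Q)))$ is, for $\sigma_j$ large, $\le\varepsilon_j\dist(x,F_j(\partial Q))$ because $t\mapsto t e^{-\sigma_j/t}$ is $\le\varepsilon_j t$ once $\sigma_j\ge\sigma_j(\varepsilon_j,\diam)$; same for $\|y'\|$. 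Condition \eqref{eq:claim_j_s1} enters in guaranteeing that the geometric comparison ``to switch from $F_j(Q)$ to $F_j(Q')$ you pay $\gtrsim\dist(x,F_j(\partial Q))$'' actually holds — it bounds the transversal tilt so that the pieces $F_j(Q)$ glue along $\oskel X_j.$ in a roughly flat way, without the $\delta$-perturbations creating shortcuts. \texttt{Step C} is then bookkeeping: choose $\sigma_j$ large enough that all the ``$\le\varepsilon_j$'' inequalities above hold simultaneously (finitely many square shapes, finitely many angle values, by \texttt{Steps 1--5} of Construction~\ref{constr:r4}), and conclude.

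I expect \textbf{the main obstacle} to be \texttt{Step B} in the regime where $Q$ and $Q'$ are \emph{adjacent} squares meeting along a common edge of $F_j(\partial Q)\cap F_j(\partial Q')$ and $p,p'$ both sit near that common edge: there $\dist(x,F_j(\partial Q))$ and $\dist(x',F_j(\partial Q'))$ are both small, so $\|p-p'\|$ is small, and I must be sure that $\|y\|,\|y'\|$ are \emph{smaller still} by a full factor $\varepsilon_j$ — which works because of the $\exp(-\sigma_j/\dist)$ decay beating any linear function as $\dist\to 0$, but requires quantifying exactly how $\|p-p'\|$ degenerates relative to the two boundary-distances (one needs the angle between $F_j(Q)$ and $F_j(Q')$ along their shared edge, bounded using the Lipschitz bounds \eqref{eq:constr_r4_r30} and the adapted-subdivision condition \eqref{eq:tritanomaly-aug-add2}). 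The secondary subtlety is making sure the finitely-many-tangent-planes reduction ($\dirnset j.$ finite, arranged in \texttt{Step 5}) is genuinely available at stage $j$ — but that is exactly what Construction~\ref{constr:r4} builds in, so I would just cite it.
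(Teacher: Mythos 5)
Your proposal correctly identifies the two ingredients of the paper's argument at level $j=1$: a quantitative transversality estimate for points lying in distinct squares, of the form $\|x_1-x_2\|\ge\alpha\max_t\dist(x_t,F_j(\partial Q_t))$, combined with the bound $\|y_t\|\le c(\sigma_j)\dist(x_t,F_j(\partial Q_t))$ coming from the superexponential decay of $\varphi_Q$, with $c(\sigma_j)\to0$. (Your separate ``Step A'' is redundant: well-definedness falls out of the Lipschitz estimate applied to two representations of the same point.) But there is a genuine gap at the heart of the lemma: you never prove the transversality estimate with a constant that is uniform over all pairs of distinct squares of $\nwsqfm,X_j.$ for $j>1$, and this is where essentially all the work in the paper lies. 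The case you single out as the ``main obstacle'' --- two \emph{adjacent} squares sharing an edge --- is in fact the easy case: by construction there are, up to translation and dilation, only finitely many local configurations over a single parent cell, so a uniform $\alpha$ exists there exactly as at level $1$. The delicate case is two squares $Q_1,Q_2$ whose projections $P_{j-1,k}(x_t)$ first land in distinct squares at some level $k_0\ll j$. For such pairs the planes in $\dirset j.$ are genuinely infinite in number (only the set of directions $\dirnset j.$ is finite), and the danger is that the cumulative perturbations added between levels $k_0$ and $j$ bring the two affine pieces much closer together than the level-$k_0$ separation would suggest, destroying any uniform $\alpha$.

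The paper handles this by induction on the first level of separation $k_0$: it writes $x_t-P_{j-1,k_0}(x_t)$ as the sum of the contributions $\Phi_{Q^{(\text{par})}_{k,t},\delta_k}$ for $k_0<k\le j$, bounds each by $28\delta_k$ times the distance to the boundary of the relevant parent cell, and then uses the adaptedness condition~(\ref{eq:tritanomaly-aug-add2}) to convert that into $\delta_k\dist(\pi_{j,k_0}\circ F_j^{-1}(x_t),\partial Q_{k_0,t})$, yielding the key bound
\begin{equation*}
  \|x_t-P_{j-1,k_0}(x_t)\|\le 28\Bigl(\sum_{k_0<k\le j}\delta_k^2\Bigr)\,
  \dist\bigl(\pi_{j,k_0}\circ F_j^{-1}(x_t),\partial Q_{k_0,t}\bigr).
\end{equation*}
It is precisely here that the hypothesis~(\ref{eq:claim_j_s1}) is used quantitatively, to get $\|x_t-q_t\|\le\tfrac98\|P_{j-1,k_0}(x_t)-q_t\|$ and hence propagate the level-$k_0$ transversality constant $\eta$ up to level $j$ with only a bounded loss (through $\glip P_{j-1,k_0}.$ and $\glip F_{k_0}.$). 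Your proposal acknowledges that~(\ref{eq:claim_j_s1}) ``bounds the transversal tilt so that the pieces glue in a roughly flat way,'' but gives no mechanism for turning that into a uniform-in-$j$ separation estimate; without the induction on $k_0$ and the displacement bound above, the argument does not close.
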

%%%%%%%%%%%%%%%%%%%%%%%%%%%%%%%%%%%%%%%%%%%%%%%%%%%%%%%%%%%%%%%%%%%%%%%%%%
\def\pardec{^\text{\normalfont (par) }}
%%%%%%%%%%%%%%%%%%%%%%%%%%%%%%%%%%%%%%%%%%%%%%%%%%%%%%%%%%%%%%%%%%%%%%%%%%
\begin{proof}
  \par\noindent\texttt{Step 1: The case $j=1$.}
  \par As $\dirset .$ is finite and $F_1$ is an isometric embedding
  plus a small Lipschitz perturbation,
  we can find an $\alpha>0$ such that if $\{Q_1,Q_2\}\subset\nwsqfm,
  X_1.$ are distinct and $x_t\in F(Q_t)$ ($t=1,2$) then:
  \begin{equation}
    \label{eq:claim_j_p1}
    \|x_1-x_2\|\ge\alpha\max_{t=1,2}\dist(x_t, F_1(\partial Q_t)).
  \end{equation}
  Let $x_1+y_1,x_2+y_2\in\RadN .$; then
  \begin{equation}
    \label{eq:claim_j_p2}
    \|y_t\|\le c(\sigma_1)\dist(x_t, F_1(\partial Q_t)),
  \end{equation}
  where $\lim_{\sigma_1\to\infty}c(\sigma_1)=0$. Therefore,
  \begin{equation}
    \label{eq:claim_j_p3}
    \|(x_1+y_1) - (x_2+y_2)\|_2\ge\|x_1-x_2\|-c(\sigma_1)(\|y_1\|+\|y_2\|),
  \end{equation}
  from which we get:
  \begin{equation}
    \label{eq:claim_j_p4}
    \left(1+\frac{2}{\alpha}c(\sigma_1)\right)\|(x_1+y_1)-(x_2+y_2)\|\ge\|x_1-x_2\|=\|P_1(x_1+y_1)
    -P_1(x_2+y_2)\|.
  \end{equation}
  Choosing $\sigma_1$ sufficiently small we obtain that $P_1$ is
  well-defined and $(1+\varepsilon_1)$-Lipschitz (note that for the
  case in which $Q_1=Q_2$ we have $\alpha=1$
  in~(\ref{eq:claim_j_p4})).
  \par\noindent\texttt{Step 2: The case $j>1$.}
  \par By induction we assume the existence of $\eta>0$ such that if
  $k\le j-1$, $x_t\in F_k(Q_t)$ ($t=1,2$ and $Q_t\in\nwsqfm ,X_k.$)
  where $Q_1\ne Q_2$, then:
  \begin{equation}
    \label{eq:claim_j_p5}
    \|x_1-x_2\|\ge\eta\max_{t=1,2}\dist(x_t, F_k(\partial Q_t)).
  \end{equation}
  We want to establish an analogue of~(\ref{eq:claim_j_p4}), but we
  will need to consider $3$ possibilities; we define:
  \begin{equation}
    \label{eq:claim_j_p6}
    P_{i,k}=P_k\circ\cdots\circ P_{i-1}\circ P_i\quad(\text{compare~(\ref{eq:unif_comp_s1})}),
  \end{equation}
  and we let $Q_{k,t}$ denote the square of $\nwsqfm,X_k.$ containing
  $F_k^{-1}(P_{j-1,k}(x_t))$.
  \par First assume thar for some $k\le j-1$ $Q_{k,1}\ne Q_{k,2}$ and
  let $k_0$ be the minimal value of $k$ such that this happens. Then:
  \begin{equation}
    \label{eq:claim_j_p7}
    \|P_{j-1,k_0}(x_1)-P_{j-1,k_0}(x_2)\|\ge\eta\max_{t=1,2}\dist(
    P_{j-1,k_0}(x_t),F_{k_0}(\partial Q_{k_0,t})
    ).
  \end{equation}
  By induction we will assume that $P_{j-1,k_0}$ is well-defined with
  $\glip P_{j-1,k_0}.<\infty$. Let $q_t\in F_{k_0}(\partial
  Q_{k_0,t})$ be a closest point to $x_t$. As $F_{k_0}|Q_{k_0,t}$ is
  affine satisfying~(\ref{eq:constr_r4_r30}), we conclude that:
  \begin{equation}
    \label{eq:claim_j_p8}
    \frac{\|P_{j-1,k_0}(x_t)-q_t\|}{
      d(F_{k_0}^{-1}(P_{j-1,k_0}(x_t)), F_{k_0}^{-1}(q_t))
    }\in\left[\frac{(1+\sum_{k\le k_0}\delta_k^2)^{1/2}}{16},
      23(1+\sum_{k\le k_0}\delta_k^2)^{1/2}\right].
  \end{equation}
  For $k_0<k\le j-1$ let $Q_{k,t}\pardec$ denote the square of
  $\nwsqfm\addec,X_{k-1}.$ containing $\pi_{k,k-1}(Q_{k,t})$. From the
  definition of $F_k$ we get:
  \begin{equation}
    \label{eq:claim_j_p9}
    x_t - P_{j-1,k_0}(x_t) = \sum_{k_0+1\le k\le
      j}\Phi_{Q_{k,t}\pardec,\delta_k}(\pi_{j,k}\circ
    F_j^{-1}(x_t))\otimes (e_{1,Q_{k,t}\pardec}\oplus e_{2,Q_{k,t}\pardec}).
  \end{equation}
  From the bound on the Lipschitz constant of
  $\Phi_{Q_{k,t}\pardec,\delta_k}$ we get:
  \begin{equation}
    \label{eq:claim_j_p10}
    \|\Phi_{Q_{k,t}\pardec,\delta_k}(\pi_{j,k}\circ
    F_j^{-1}(x_t))\|_{\real^2}\le 28\delta_k \dist(\pi_{j,k}\circ
    F_j^{-1}(x_t), \partial Q_{k,t}\pardec);
  \end{equation}
  recall from \texttt{Step 6} in~\ref{constr:r4} that $\partial
  Q_{k_0,t}$ is isometrically embedded in $X_k$ for $k\ge k_0$; as
  geodesic paths joining a point $p\in X_k$ to a point $q\in\oskel
  X_k.$ can be taken not to pass through different sheets of the
  double covers and, minding~(\ref{eq:tritanomaly-aug-add2}), we have for $k_0<k\le j-1$:
  \begin{equation}
    \label{eq:claim_j_p11}
    \dist(\pi_{j,k}\circ
    F_j^{-1}(x_t), \partial Q_{k,t}\pardec)\le \delta_k\dist(\pi_{j,k_0}\circ
    F_j^{-1}(x_t), \partial Q_{k_0,t}).
  \end{equation}
  Commbining~(\ref{eq:claim_j_p9}),
  (\ref{eq:claim_j_p10}) and~(\ref{eq:claim_j_p11}) we get:
  \begin{equation}
    \label{eq:claim_j_p12}
    \|x_t - P_{j-1,k_0}(x_t)\|_{\real^2}\le 28\left(
      \sum_{k_0<k\le j} \delta_k^2\right)
    \dist(\pi_{j,k_0}\circ
    F_j^{-1}(x_t), \partial Q_{k_0,t}).
  \end{equation}
  Recalling~(\ref{eq:claim_j_p9})
  \begin{equation}
    \label{eq:claim_j_p13}
    \|x_t-q_t\|\le\|P_{j-1,k_0}(x_t)-q_t\|+28\times 16
    (1+\sum_{k\le k_0}\delta_k^2)^{1/2}\|P_{j-1,k_0}(x_t)-q_t\|,
  \end{equation}
  and the choice~(\ref{eq:claim_j_s1}) of the sequence
  $\{\delta_k\}_k$, we get:
  \begin{equation}
    \label{eq:claim_j_p14}
    \|x_t-q_t\|\le\frac{9}{8}\|P_{j-1,k_0}(x_t)-q_t\|.
  \end{equation}
  Now:
  \begin{equation}
    \begin{split}
      \label{eq:claim_j_p15}
                \|x_1-x_2\|&\ge\frac{1}{\glip P_{j-1,k_0}.}\|
                P_{j-1,k_0}(x_1)-P_{j-1,k_0}(x_2)
                \|\\
                &\ge\frac{\eta}{\glip P_{j-1,k_0}.}\max_{t=1,2}
                \dist(\pi_{j,k_0}\circ
                F_j^{-1}(x_t), \partial Q_{k_0,t})\\
                &\ge\frac{8\eta}{9\glip
                  P_{j-1,k_0}.\glip F_{k_0}.}\dist(x_t,F_{k_0}(\partial Q_{k_0,t})).
    \end{split}
  \end{equation}
  If $x_t+y_t\in\RadN j.$ then
    \begin{equation}
    \label{eq:claim_j_p16}
    \|(x_1+y_1) - (x_2+y_2)\|\ge\|x_1-x_2\|-c(\sigma_2)(\|y_1\|+\|y_2\|),
  \end{equation}
  where $\lim_{\sigma_2\to\infty}c(\sigma_2)=0$, and we can conclude
  as in \texttt{Step 1}.
  \par In the second case assume that $Q_{j-1,1}=Q_{j-1,2}$ but
  $Q_{j-1,1}\pardec\ne Q_{j-1,2}\pardec$. Then $P_{j-1}(x_1)$ and
  $P_{j-1}(x_2)$ lie on the same affine plane of $\dirset j-1.$
  and thus:
  \begin{equation}
    \label{eq:claim_j_p17}
    \|P_{j-1}(x_1)-P_{j-1}(x_2)\|\ge\max_{t=1,2}\dist(P_{j-1}(x_t),F_{j-1}(\partial Q_{k_0,t})),
  \end{equation}
  and we can then argue as in the first case.
  \par Third, if $Q_{j-1,1}\pardec= Q_{j-1,2}\pardec$ we can argue as
  in \texttt{Step 1}. In fact, by \texttt{Step 6} in Construction~\ref{constr:r4}
  the set $\dirnset j-1.$ is finite and, up to translations and
  dilations, there are only finitely many possibilities for the
  subcomplexes of $Y_j$ which project via $P_{j-1}$ onto some $F_{j-1}(Q)$
  for $Q\in\nwsqfm \addec,X_{j-1}.$. Thus we can find an $\alpha>0$
  such that if $\{Q_1,Q_2\}\subset\nwsqfm,
  X_j.$ are distinct, $x_t\in F(Q_t)$ ($t=1,2$) and $\pi_{j,j-1}(Q_1)$
  and $\pi_{j,j-1}(Q_2)$ belong to the same square of $\nwsqfm\addec,X_{j-1}.$ then
  \begin{equation}
    \label{eq:claim_j_p18}
    \|x_1-x_2\|\ge\alpha\max_{t=1,2}\dist(x_t,F_j(\partial Q_t)),
  \end{equation}
  and then argue as in \texttt{Step 1}.
\end{proof}
%%%%%%%%%%%%%%%%%%%%%%%%%%%%%%%%%%%%%%%%%%%%%%%%%%%%%%%%%%%%%%%%%%%%%%%%%%
\begin{lem}[Compositions of $P_i$ are uniformly Lipschitz]
  \label{lem:unif_comp}
  Assume that $\sum_t\varepsilon_t<\infty$; then the Lipschitz maps
  $P_i:\RadN i.\to Y_i$ can be composed to give uniformly Lipschitz
  maps; specifically, for $k<i$ let:
  \begin{equation}
    \label{eq:unif_comp_s1}
    P_{i,k}=P_k\circ\cdots\circ P_{i-1}\circ P_i;
  \end{equation}
  then:
  \begin{equation}
    \label{eq:unif_comp_s2}
    \glip P_{i,k}.\le\prod_t(1+\varepsilon_t).
  \end{equation}
  Let $F_i(X_i)=Y_i$ ($i=\infty$ is admissible); then the maps
  \begin{equation}
    \label{eq:unif_comp_s3}
    P_{i,k}:\RadN i.\to Y_k
  \end{equation}
  as $i\nearrow\infty$ converge weak* to a map:
  \begin{equation}
    \label{eq:unif_comp_s4}
    P_{\infty,k}:Y_\infty\to Y_k
  \end{equation}
  which satisfies $\glip
  P_{\infty,k}.\le\prod_t(1+\varepsilon_t)$. For $k<l\le i$ ($i$ or
  $l$ can be $\infty$ with $P_{\infty,\infty}$ being taken to be the
  identity of $Y_\infty$) one has:
  \begin{equation}
    \label{eq:unif_comp_s5}
    P_{l,k}\circ P_{i,l} = P_{i,k};
  \end{equation}
  as $k\nearrow\infty$ $P_{\infty,k}$ converges weak* to
  $P_{\infty,\infty}$. 
\end{lem}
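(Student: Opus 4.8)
I would first make the finite compositions precise and bound their Lipschitz constants, then extract the geometric decay of scales built into Construction~\ref{constr:r4}, and finally pass to the limit in every index. The first point to settle is that the composition in~(\ref{eq:unif_comp_s1}) makes sense, i.e.~that $Y_j=P_j(\RadN j.)\subset\RadN j-1.$, so $P_{j-1}$ is defined on $Y_j$. This is exactly what the ``adapted'' clause of Construction~\ref{constr:r4} gives: $X_j$ arises from $X_{j-1}$ by applying Construction~\ref{constr:basic_map} to the maximal adapted squares $Q\in\nwsqfm\addec,X_{j-1}.$ and subdividing $N$ more times, and in $F_j=F_{j-1}\circ\pi_{j,j-1}+\sum_Q\Phi_{Q,\delta_j}\otimes(e_{1,Q}\oplus e_{2,Q})$ the summand $\Phi_{Q,\delta_j}$ is directed orthogonally to $\tau(Q)$ and, by~(\ref{eq:constr_r4_5}), has norm $\le2\delta_j\diam Q$; since $Q$ is adapted, the $(23\delta_{j-1}\diam F_{j-1}(Q))$-neighbourhood of $F_{j-1}(Q)$ already lies in $\RadN j-1.$ and contains the perturbed image. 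Granting this, $P_{i,k}=P_k\circ\cdots\circ P_i$ is a genuine composition of the $P_j$, each well-defined and $(1+\varepsilon_j)$-Lipschitz by Lemma~\ref{lem:claim_j} once the $\sigma_j$ are fixed accordingly; as $\sum_t\varepsilon_t<\infty$ forces $\prod_t(1+\varepsilon_t)<\infty$, we get $\glip P_{i,k}.\le\prod_{t=k}^i(1+\varepsilon_t)\le\prod_t(1+\varepsilon_t)$, which is~(\ref{eq:unif_comp_s2}).

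\textbf{Geometric decay of scales.} Next I would record the scales; below $C$ denotes a universal constant which may change from line to line. Each passage from $X_j$ to $X_{j+1}$ strictly subdivides: by~(\ref{eq:tritanomaly-aug-add2}) an adapted square has side $\le\delta_j$ times that of its parent in $\nwsqfm,X_j.$ and is then subdivided $N$ further times, so the squares of $X_j$ have side $\le5^{-j}$ and, by the bound~(\ref{eq:constr_r4_r30}) on $\glip F_j.$, $\diam F_j(Q)\le C5^{-j}$ uniformly in $Q$. Hence $\RadN j.$ is contained in the $C5^{-j}$-neighbourhood of $Y_j$ (the radial-basis functions are bounded by $46\diam F_j(Q)$), while $Y_j\subseteq\RadN j.$ (take $y=0$), and each $P_j$ displaces points by at most $C5^{-j}$. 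Since moreover $F_i\circ\pi_{\infty,i}\to F_\infty$ uniformly (the right-hand side of~(\ref{eq:constr_r4_r31}) being summable) and the maps $\pi_{\infty,i}$ are onto, $Y_i\to Y_\infty$ in the Hausdorff sense; combined with the previous sentence, $\RadN i.\to Y_\infty$ in the Hausdorff sense as well.

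\textbf{Passage to the limit.} For $x\in Y_\infty$ write $x=F_\infty(\xi)$ and put $x_i=F_i(\pi_{\infty,i}(\xi))\in Y_i\subset\RadN i.$; by~(\ref{eq:constr_r4_r31}) the increments $\|x_{i+1}-x_i\|\le56\cdot5^{-i}\delta_{i+1}$ are summable, and using $P_{i+1,k}=P_{i,k}\circ P_{i+1}$ together with the displacement bound and~(\ref{eq:unif_comp_s2}) one has
\[
\|P_{i+1,k}(x_{i+1})-P_{i,k}(x_i)\|\le\left(\prod\nolimits_t(1+\varepsilon_t)\right)\bigl(\|x_{i+1}-x_i\|+C5^{-i-1}\bigr),
\]
so $\{P_{i,k}(x_i)\}_i$ is Cauchy in the compact set $Y_k$; call its limit $P_{\infty,k}(x)$. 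The uniform Lipschitz bound forces the same limit for every representing sequence, so $P_{\infty,k}:Y_\infty\to Y_k$ is well-defined, $P_{i,k}\xrightarrow{\text{w*}}P_{\infty,k}$ in the sense of Definition~\ref{defn:weak*conv}, and $\glip P_{\infty,k}.\le\prod_t(1+\varepsilon_t)$ because that bound passes to weak* limits. For finite $k<l\le i$ the identity $P_{l,k}\circ P_{i,l}=P_{i,k}$ is just associativity of composition (using $Y_i\subset\RadN i-1.$ at each stage); applying the continuous map $P_{l,k}$ to $P_{i,l}(x_i)\to P_{\infty,l}(x)$ and recognising $P_{l,k}\circ P_{i,l}=P_{i,k}$ gives $P_{l,k}\circ P_{\infty,l}=P_{\infty,k}$, which, together with the trivial $P_{\infty,k}\circ P_{\infty,\infty}=P_{\infty,k}$ (where $P_{\infty,\infty}:=\mathrm{Id}_{Y_\infty}$), is~(\ref{eq:unif_comp_s5}). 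Finally, iterating the displacement estimate yields $\|P_{\infty,k}(x)-x\|\le\sum_{j>k}C5^{-j}\to0$, so $P_{\infty,k}\xrightarrow{\text{w*}}P_{\infty,\infty}$ as $k\nearrow\infty$.

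\textbf{Main obstacle.} The one genuinely delicate point is the containment $Y_j\subset\RadN j-1.$: one must be sure that the orthogonal perturbations added at each stage never leave the (non-open) radial-basis neighbourhood, which is precisely the purpose of the adaptation condition~(\ref{eq:tritanomaly-aug-add2}) together with the size bound~(\ref{eq:constr_r4_5}); a little care is needed near $F_{j-1}(\partial Q)$, where $\RadN j-1.$ degenerates but where the perturbation $\Phi_{Q,\delta_j}$ also vanishes. Everything else is bookkeeping with the single uniform constant $\prod_t(1+\varepsilon_t)$ and the geometric scale $5^{-j}$.
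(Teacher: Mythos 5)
Your proposal is correct and follows essentially the same route as the paper: the uniform bound $\glip P_{i,k}.\le\prod_t(1+\varepsilon_t)$, the displacement estimate $\|P_i(x)-x\|\lesssim 5^{-i}$ coming from the bound $\varphi_Q\le 46\diam F_i(Q)$, uniform (hence weak*) convergence to $P_{\infty,k}$, and limiting arguments for~(\ref{eq:unif_comp_s5}) and for $P_{\infty,k}\xrightarrow{\text{w*}}P_{\infty,\infty}$. The only difference is that you explicitly verify the containment $Y_{j}\subset\RadN j-1.$ needed for the compositions to be defined (via the adaptation condition and the vanishing of $\Phi_{Q,\delta_j}$ near $F_{j-1}(\partial Q)$), a point the paper leaves implicit in Construction~\ref{constr:r4}; this is a worthwhile addition rather than a divergence.
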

%%%%%%%%%%%%%%%%%%%%%%%%%%%%%%%%%%%%%%%%%%%%%%%%%%%%%%%%%%%%%%%%%%%%%%%%%%
\begin{proof}
  Assuming that $\sum_t\varepsilon_t<\infty$ we have a uniform bound
  on the Lipschitz constants of the maps $P_{i,k}$:
  \begin{equation}
    \label{eq:unif_comp_p1}
    \sup_{i,k}\glip P_{i,k}.\le\prod_t(1+\varepsilon_t)<\infty.
  \end{equation}
  From the definition of $\RadN i.$ we get that if $\sigma_i>1$ (note
  that the $\sigma_i$'s are chosen very large in
  Lemma~\ref{lem:claim_j}) we have:
  \begin{equation}
    \label{eq:unif_comp_p2}
    \sup_{x\in\RadN i.}\|P_i(x)-x\|\le 100\cdot 5^{-i}.
  \end{equation}
  In particular, for a universal constant $C>0$ we have:
  \begin{equation}
    \label{eq:unif_comp_p3}
    \sup_l\sup_{x\in\RadN i+l.}\|P_{i+l,k}(x)-P_{i,k}(x)\|\le C5^{-i}.
  \end{equation}
  Therefore, on $Y_\infty$ the maps $P_{i,k}$ converge, uniformly as
  $i\nearrow\infty$ to a map $P_{\infty,k}$ which must be Lipschitz
  because of~(\ref{eq:unif_comp_p1}); the uniform
  bound~(\ref{eq:unif_comp_p1}) also ensures that convergence is in
  the weak* sense.
  \par From the definition of $P_{i,k}$ we have
  that~(\ref{eq:unif_comp_s5}) holds when all of  $\{i,l,k\}$ are
  finite. For $l=\infty$ or $i=\infty$ we establish the result by a
  limiting argument setting $P_{\infty,\infty}$ equal to the identity
  of $Y_\infty$. We are thus only left to show that $P_{\infty,k}$
  converges on $Y_k$ uniformly to the identity. But this is immediate
  observing that~(\ref{eq:unif_comp_p2}) gives:
  \begin{equation}
    \label{eq:unif_comp_p4}
    \sup_{x\in Y_\infty=\bigcap_k\RadN k.}\|P_{k,\infty}(x)-x\|\le
    10^3\times 5^{-k}.
  \end{equation}
\end{proof}
%%%%%%%%%%%%%%%%%%%%%%%%%%%%%%%%%%%%%%%%%%%%%%%%%%%%%%%%%%%%%%%%%%%%%%%%%% 
\begin{lem}[Convergence of the Embeddings]
  The topological embeddings $F_i:X_i\hookrightarrow\real^4$
  converge, as $i\nearrow\infty$, to a topological embedding
  $F_\infty:X_\infty\hookrightarrow\real^4$ such that:
  \begin{equation}
    \label{eq:embedd_conv_s1}
    16^{-1}\bigl(1+\sum_i\delta_i^2\bigr)^{1/2}\le\glip F_\infty.\le 23\bigl(1+\sum_i\delta_i^2\bigr)^{1/2}.
  \end{equation}
  For each $k<i$ ($i=\infty$ being admissible) one has a
  commutative diagram:
  \begin{equation}
    \label{eq:embedd_conv_s2}
    \xy
    (0,20)*+{X_{i}} = "xi"; (20,20)*+{Y_{i}} = "yi";
    (0,0)*+{X_k} = "xk"; (20,0)*+{Y_k} = "yk";
    {\ar "xi"; "yi"}?*!/_2mm/{F_{i}};
    {\ar "xk"; "yk"}?*!/^2mm/{F_k};
    {\ar "xi"; "xk"}?*!/^7mm/{\pi_{i,k}};
    {\ar "yi"; "yk"}?*!/_4mm/{P_{i,k}};
    \endxy
     \end{equation}
\end{lem}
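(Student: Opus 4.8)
The plan is to realise $F_\infty$ as the uniform limit of the pull-backs $F_i\circ\pi_{\infty,i}\colon X_\infty\to\real^4$ and then to extract every assertion from the finite-stage data together with Lemmas~\ref{lem:convergence} and~\ref{lem:unif_comp}. Using $\pi_{\infty,i}=\pi_{i+1,i}\circ\pi_{\infty,i+1}$ and that $\pi_{i+1,i}$ is $1$-Lipschitz, the estimate~(\ref{eq:constr_r4_r31}) telescopes:
\[
\|F_i\circ\pi_{\infty,i}-F_{i+1}\circ\pi_{\infty,i+1}\|_\infty=\|(F_i\circ\pi_{i+1,i}-F_{i+1})\circ\pi_{\infty,i+1}\|_\infty\le 56\cdot5^{-i}\delta_{i+1},
\]
and since $\sum_i5^{-i}\delta_{i+1}<\infty$ the sequence $\{F_i\circ\pi_{\infty,i}\}_i$ is uniformly Cauchy on the compact space $X_\infty$; let $F_\infty$ be its uniform limit. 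Each $F_i\circ\pi_{\infty,i}$ is $\glip F_i.$-Lipschitz, hence $23(1+\sum_l\delta_l^2)^{1/2}$-Lipschitz by~(\ref{eq:constr_r4_r30}); a pointwise limit of maps with a common Lipschitz bound has the same bound, so $F_\infty$ satisfies the upper inequality in~(\ref{eq:embedd_conv_s1}).

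Next I establish the commutative diagram~(\ref{eq:embedd_conv_s2}) and deduce that $F_\infty$ is an embedding. For finite $i>k$ this is~(\ref{eq:constr_r4_32}) iterated: $P_i$ restricts to the identity on $Y_i$ (a point of $Y_i$ has zero normal component), so the composition appearing there agrees on $Y_i$ with $P_{i,k}|_{Y_i}$, and compatibility of the $\pi$'s and of the $P$'s gives $P_{i,k}\circ F_i=F_k\circ\pi_{i,k}$. For $i=\infty$, composing with $\pi_{\infty,i}$ yields $P_{i,k}\circ F_i\circ\pi_{\infty,i}=F_k\circ\pi_{\infty,k}$; letting $i\to\infty$, the right side is fixed, while on the left $F_i\circ\pi_{\infty,i}(x)\to F_\infty(x)$, a point of $Y_\infty$, and $P_{i,k}$ converges weak* to $P_{\infty,k}$ by Lemma~\ref{lem:unif_comp}, so by Definition~\ref{defn:weak*conv} the left side converges to $P_{\infty,k}(F_\infty(x))$; hence $P_{\infty,k}\circ F_\infty=F_k\circ\pi_{\infty,k}$. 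Since $F_\infty$ is continuous and $X_\infty$ compact it is enough to check injectivity: if $F_\infty(x)=F_\infty(y)$, applying $P_{\infty,k}$ gives $F_k(\pi_{\infty,k}(x))=F_k(\pi_{\infty,k}(y))$, whence $\pi_{\infty,k}(x)=\pi_{\infty,k}(y)$ because $F_k$ is injective; as this holds for every $k$ and a point of $X_\infty$ is determined by its projections, $x=y$.

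The remaining and only delicate point is the lower bound in~(\ref{eq:embedd_conv_s1}). Because the helper maps $\Phi_\delta$ of Construction~\ref{constr:r4} are degenerate on the annular plateaus where $\phi$ is constant, $F_\infty$ is \emph{not} co-Lipschitz, so a pointwise lower bound cannot simply be passed to the limit; instead I route the estimate through the diagram. From $P_{\infty,k}\circ F_\infty=F_k\circ\pi_{\infty,k}$ and Lemma~\ref{lem:unif_comp} one gets $\glip F_\infty.\ge\glip P_{\infty,k}.^{-1}\,\glip F_k\circ\pi_{\infty,k}.\ge\bigl(\prod_t(1+\varepsilon_t)\bigr)^{-1}\glip F_k\circ\pi_{\infty,k}.$. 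Since $\pi_{\infty,k}$ is an inverse limit of unbranched double covers followed by boundary-fibre collapses, it is a local isometry outside a set of $\mu_k$-measure zero, so $\mu_k$-a.e.\ point of $X_k$ has a lift to $X_\infty$ at which $\pi_{\infty,k}$ is infinitesimally isometric; as $F_k$ is piecewise affine its maximal infinitesimal stretch is realised on square interiors, which meet this full-measure set, giving $\glip F_k\circ\pi_{\infty,k}.=\glip F_k.$. Letting $k\to\infty$ and invoking~(\ref{eq:constr_r4_r30}) yields $\glip F_\infty.\gtrsim(1+\sum_i\delta_i^2)^{1/2}$; the factor $\prod_t(1+\varepsilon_t)$, which may be taken as close to $1$ as we wish, is harmlessly absorbed into the deliberately generous constant $16^{-1}$, since $\sum_k\delta_k^2$ is tiny by~(\ref{eq:claim_j_s1}). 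The crux is thus the control on $\pi_{\infty,k}$: one must know it well enough to see that none of the stretching of $F_k$ is destroyed upon pulling back to $X_\infty$; everything else is bookkeeping.
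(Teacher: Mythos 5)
Your argument follows the paper's proof essentially verbatim: uniform convergence from telescoping~(\ref{eq:constr_r4_r31}), the Lipschitz bounds from~(\ref{eq:constr_r4_r30}), commutativity of~(\ref{eq:embedd_conv_s2}) from~(\ref{eq:constr_r4_32}) plus a limiting argument, and injectivity by separating points with some $P_{\infty,k}$ via the diagram. The only place you go beyond the paper is the lower bound in~(\ref{eq:embedd_conv_s1}), where your a.e.-local-isometry claim for $\pi_{\infty,k}$ is more than is needed (and slightly under-justified): since the constant $16^{-1}$ is generous and $\sum_i\delta_i^2$ is tiny by~(\ref{eq:claim_j_s1}), it suffices to apply your diagram inequality with $k=0$ and note that $\glip F_0\circ\pi_{\infty,0}.\ge 1$, e.g.\ by evaluating on two corners of $X_0$, whose lifts to $X_\infty$ are at the same distance because the $1$-skeleton embeds isometrically at every stage.
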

%%%%%%%%%%%%%%%%%%%%%%%%%%%%%%%%%%%%%%%%%%%%%%%%%%%%%%%%%%%%%%%%%%%%%%%%%%
\begin{proof}
  Note that from~(\ref{eq:constr_r4_r31}) we have:
  \begin{equation}
    \label{eq:embedd_conv_p1}
    \sup_{x\in X_{i+k}}\|F_{i+k}(x)-F_i(\pi_{i+k,i}(x))\|\le 200\cdot 5^{-i},
  \end{equation}
  and so the embeddings $F_i:X_i\hookrightarrow\real^4$ converge
  uniformly to a map $F_\infty:X_\infty\to\real^4$ which must
  satisfy~(\ref{eq:embedd_conv_s1}) because
  of~(\ref{eq:constr_r4_r30}). 
  \par The diagram~(\ref{eq:embedd_conv_s2}) commutes because
  of~(\ref{eq:constr_r4_32}) (the case $i=\infty$ is handled by a
  limiting argument).
  \par Finally, as $X_\infty$ is compact, in order to conclude that
  $F_\infty$ is an embedding it suffices to establish that it is
  injective. Let $x,y$ be distinct points of $X_\infty$; then for some
  $k$: $\pi_{\infty,k}(x)\ne \pi_{\infty,k}(y)$ and, as $F_k$ is an
  embedding:
  \begin{equation}
    \label{eq:embedd_conv_p2}
    F_k(\pi_{\infty,k}(x))\ne F_k(\pi_{\infty,k}(y));
  \end{equation}
  but as the diagrams~(\ref{eq:embedd_conv_s2}) commute:
  \begin{equation}
    \label{eq:embedd_conv_p3}
    P_{\infty,k}(F_\infty(x))\ne P_{\infty,k}(F_\infty(y)).
  \end{equation}
\end{proof}
%%%%%%%%%%%%%%%%%%%%%%%%%%%%%%%%%%%%%%%%%%%%%%%%%%%%%%%%%%%%%%%%%%%%%%%%%%
\begin{lem}[Existence and nontriviality of the $2$-current]
  \label{lem:nontriv_curr_r4}
  The pushforward $F_{\infty\#}N_\infty$ is a nontrivial normal
  current in $\real^4$ supported on $Y_\infty$; in fact:
  \begin{equation}
    \label{eq:nontriv_curr_r4_s1}
    P_{\infty,0\#}F_{\infty\#}N_\infty=F_{0\#}N_0.
  \end{equation}
\end{lem}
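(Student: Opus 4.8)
The plan is to read the statement off the commutative diagram~\eqref{eq:embedd_conv_s2} together with the functoriality of the push-forward, the preceding lemma having already done the analytic work. First, since that lemma gives $\glip F_\infty.<\infty$, the push-forward $F_{\infty\#}N_\infty$ is a well-defined current in $\real^4$, and it is normal: push-forward under a Lipschitz map preserves finiteness of mass and of boundary mass, with $\partial(F_{\infty\#}N_\infty)=F_{\infty\#}(\partial N_\infty)$ (and, living in $\real^4$ with compact support, it is also a classical normal current). Its support is contained in $F_\infty(\operatorname{supp}N_\infty)\subset F_\infty(X_\infty)=Y_\infty$. Since by Lemma~\ref{lem:unif_comp} the map $P_{\infty,0}\colon Y_\infty\to Y_0$ is Lipschitz and $Y_\infty=\bigcap_k\RadN k.$ lies in its domain, $P_{\infty,0\#}$ is defined on $F_{\infty\#}N_\infty$.

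For the displayed identity I would compute
\begin{equation*}
  P_{\infty,0\#}F_{\infty\#}N_\infty=(P_{\infty,0}\circ F_\infty)_{\#}N_\infty=(F_0\circ\pi_{\infty,0})_{\#}N_\infty=F_{0\#}\bigl(\pi_{\infty,0\#}N_\infty\bigr),
\end{equation*}
where the middle equality is the commutativity of~\eqref{eq:embedd_conv_s2} for $i=\infty$, $k=0$. It then remains to note that $\pi_{\infty,0\#}N_\infty=N_0$: this follows by composing the identities $\pi_{j+1,j\#}N_{j+1}=N_j$ from Construction~\ref{constr:r4} to get $\pi_{n,0\#}N_n=N_0$ for finite $n$, and passing to the limit $n\to\infty$ using the weak convergence $N_n\to N_\infty$ and $\pi_{n,0}\to\pi_{\infty,0}$ supplied by Lemma~\ref{lem:convergence}. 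Hence $P_{\infty,0\#}F_{\infty\#}N_\infty=F_{0\#}N_0$, which is~\eqref{eq:nontriv_curr_r4_s1}.

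Nontriviality is then immediate: $F_0$ is an isometric embedding of $X_0=[0,1]^2$ into the coordinate plane $e_1\oplus e_2$ and $N_0$ is the anticlockwise-oriented unit square carrying Lebesgue measure, so, writing $x,y$ for the coordinates on $e_1\oplus e_2$, we have $F_{0\#}N_0(dx\wedge dy)=\hmeas 2.(X_0)=1\neq 0$; thus $F_{0\#}N_0\neq0$, and since the push-forward of the zero current is zero, $F_{\infty\#}N_\infty$ cannot vanish. The argument is essentially bookkeeping: the only points that deserve a word of care are the verification that $\operatorname{supp}F_{\infty\#}N_\infty\subset Y_\infty$ (so that all the compositions are legitimate — this is exactly why $Y_\infty=\bigcap_k\RadN k.$) and the legitimacy of the limiting identity $\pi_{\infty,0\#}N_\infty=N_0$, both of which are handed to us by the earlier lemmas, so I do not expect any genuine obstacle here.
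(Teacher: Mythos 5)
Your argument is correct, but it is organized along exactly the route the paper mentions and then deliberately avoids: the author writes that one ``might argue from the commutative diagram~(\ref{eq:embedd_conv_s2}) for $(i,k)=(\infty,0)$'', but that ``some sleight of hand is concealed in this approach'', and instead gives a direct computation. The paper's computation tests $P_{\infty,0\#}F_{\infty\#}N_\infty$ against a fixed form $f\,dg_1\wedge dg_2$, approximates the \emph{map} $P_{\infty,0}\circ F_\infty$ by $P_{i,0}\circ F_i\circ\pi_{\infty,i}$ (which converges weak*), and invokes only the continuity axiom of the single metric current $N_\infty$ together with the finite-level identities $P_{i,0}\circ F_i=F_0\circ\pi_{i,0}$ and $\pi_{\infty,i\#}N_\infty=N_i$; all limiting is thus concentrated in one place and never requires the $i=\infty$ diagram. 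Your version instead uses the $i=\infty$ diagram (legitimate, since that lemma establishes it), functoriality of push-forward for maps defined only on the support (legitimate via a McShane extension of $P_{\infty,0}$, as you note), and then reduces to $\pi_{\infty,0\#}N_\infty=N_0$. This last step, as you set it up, is the one place where you need a slightly stronger tool than the paper does: you pass to the limit in $\pi_{n,0\#}N_n=N_0$ with \emph{both} the currents $N_n$ and the maps $\pi_{n,0}$ varying, which requires joint continuity of push-forward under simultaneous weak convergence of equibounded normal currents and weak* convergence of uniformly Lipschitz maps. That statement is true and standard in the Ambrosio--Kirchheim framework, but you should either cite it or sidestep it, e.g.\ by using $\pi_{\infty,0}=\pi_{i,0}\circ\pi_{\infty,i}$ exactly and the identity $\pi_{\infty,i\#}N_\infty=N_i$ coming from the inverse-limit construction of $N_\infty$, or by imitating the paper's trick of holding $N_\infty$ fixed and letting only the test maps vary. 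With that point made explicit, your proof is complete; the nontriviality step is identical to the paper's.
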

%%%%%%%%%%%%%%%%%%%%%%%%%%%%%%%%%%%%%%%%%%%%%%%%%%%%%%%%%%%%%%%%%%%%%%%%%%
\begin{proof}
  One just needs to prove~(\ref{eq:nontriv_curr_r4_s1}) and might argue from
  the commutative diagram~(\ref{eq:embedd_conv_s2}) for $(i,k)=(\infty,0)$. But some sleight of
  hand is concealed in this approach and for the Apprehensive Analyst we
  provide a direct computation which uses weak* continuity of normal currents:
  \begin{equation}
    \label{eq:nontriv_curr_r4_p1}
    P_{\infty,0\#}F_{\infty\#}N_\infty(fdg_1\wedge df_2)=N_\infty(f\circ
    P_{\infty,0}\circ F_\infty d(g_1\circ
    P_{\infty,0}\circ F_\infty)\wedge  d(g_2\circ
    P_{\infty,0}\circ F_\infty));
  \end{equation}
  but $P_{i,0}\circ F_i\circ\pi_{\infty,i}\xrightarrow{\text{w*}}
  P_{\infty,0}\circ F_\infty$ as
  $i\nearrow\infty$ and thus:
  \begin{equation}
    \label{eq:nontriv_curr_r4_p2}
    \begin{split}
      P_{\infty,0\#}\circ F_{\infty\#}N_\infty
      &=\lim_{i\to\infty}N_\infty\bigl[
      (P_{i,0}\circ F_i\circ\pi_{i,\infty})^*fdg_1\wedge dg_2
      \bigr]\\
      &=\lim_{i\to\infty}N_i((P_{i,0}\circ F_i)^*fdg_1\wedge dg_2)\\
      &=\lim_{i\to\infty}P_{i,0\#}F_{i\#}N_i(fdg_1\wedge
      dg_2)=F_{0\#}N_0(fdg_1\wedge dg_2).
    \end{split}
  \end{equation}
\end{proof}
%%%%%%%%%%%%%%%%%%%%%%%%%%%%%%%%%%%%%%%%%%%%%%%%%%%%%%%%%%%%%%%%%%%%%%%%%%
\begin{thm}[$2$-unrectifiability of $Y_\infty$]
  \label{thm:unrect_r4}
  $Y_\infty$ is purely $2$-unrectifiable in the sense that whenever
  $K\subset\real^2$ is compact and $\Phi:K\to \real^4$ is Lipschitz,
  $\hmeas 2.(\Phi^{-1}(Y_\infty)\cap K)=0$.
\end{thm}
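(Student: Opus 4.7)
The plan follows the three-step structure of Theorem~\ref{thm:unrect_hilbert}, with the orthogonal projections $P_i:l^2\to l^2$ replaced by the curvilinear projections $P_{\infty,i}:Y_\infty\to Y_i$ of Lemma~\ref{lem:unif_comp}. Since $\real^4$ is finite-dimensional, Rademacher's theorem applies directly to $\Phi$ and no Radon--Nikodym considerations are needed. Assume for contradiction that $K\subset\Phi^{-1}(Y_\infty)$ has $\hmeas 2.(K)>0$, and set $\Phi_j=P_{\infty,j}\circ\Phi:K\to Y_j$; by Lemma~\ref{lem:unif_comp} each $\Phi_j$ is Lipschitz with uniform bound, and $\Phi_j\to\Phi$ uniformly.

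Step 1 reduces to the case where $\Phi$ is biLipschitz, $K\subset Y_0$, and $\Phi_0=\mathrm{Id}_K$. The key observation is that each $Y_n$ decomposes into the finite family of affine pieces $\{F_n(Q)\}_{Q\in\nwsqfm,X_n.}$; on each such piece the composition $P_{n,0}$ restricts to a composition of orthogonal projections between affine $2$-planes, and the smallness constraint~(\ref{eq:claim_j_s1}) combined with the finiteness of $\dirnset n.$ (from \texttt{Step 5} of Construction~\ref{constr:r4}) guarantees that $P_{n,0}|_{F_n(Q)}$ is biLipschitz onto its image in $Y_0$ with controlled constants. Letting $E\subset K$ denote the Lebesgue density points where $\Phi$ is differentiable and $d\Phi_0$ has rank $<2$, the area formula on $\Phi_0$ gives $\hmeas 2.(\Phi_0(E)\cap Y_0)=0$; transferring through the biLipschitz decomposition of $Y_n$ yields $\hmeas 2.(\Phi_n(E)\cap Y_n)=0$ and hence $\int_E J_2(d\Phi_n)\,d\hmeas 2.=0$ by the area formula on $\Phi_n$. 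Since the skeleton $F_\infty(\oskel X_\infty.)$ is $1$-dimensional, its $\Phi$-preimage in the full-rank set of $\Phi$ is $\hmeas 2.$-null by the area formula, so outside this null set each $P_{\infty,n}$ is classically smooth at $\Phi(p)$; the Lipschitz chain rule then gives $d\Phi_n(p)=dP_{\infty,n}(\Phi(p))\circ d\Phi(p)$ with $dP_{\infty,n}(\Phi(p))\to\mathrm{Id}$, and dominated convergence forces $J_2(d\Phi)=0$ on $E$, so $\hmeas 2.(\Phi(E))=0$. Kirchheim's Lipschitz inverse function theorem then completes the reduction exactly as in the Hilbert proof.

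Step 2 is the holonomy argument, whose structure is identical to the Hilbert case. Fix $n\ge 1$ and an adapted square $Q\in\nwsqfm\addec,X_{n-1}.$ on which Construction~\ref{constr:basic_map} is applied at level $n$, together with the central annulus $\hat Q\adec$ and cut $\sigma$. Set $i_n=\lceil-\log_5(5^{-n-2}c\delta_n)\rceil$ with $c>0$ small depending on $\glip\Phi.$, $\glip F_\infty.$ and $\sup_n\glip P_{\infty,n}.$, and partition $\hat Q\adec$ into $\approx 5^{i_n-n}$ annuli $A=\{R_1,\dots,R_t\}$. Assuming $K\cap\Inside(R_\alpha)\ni p_\alpha$ for every $\alpha$, induct on $0\le j\le n$ to show that $\Phi_j(p_\alpha)$ and $\Phi_j(p_{(\alpha+1)\mod t})$ lie in the same square of $\nwsqfm,X_j.$; the induction step reproduces verbatim the bookkeeping of~(\ref{eq:unrect_hilbert_p7})--(\ref{eq:unrect_hilbert_p14}), with $\glip\Phi.$ playing the role of the Hilbert biLipschitz constant $C$ and the sheet-separation bound~(\ref{eq:constr_r4_4}) replacing~(\ref{eq:constr:basic_map_1}). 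At $j=n$ this places $F_n^{-1}(\Phi_n(p_1))$ and $F_n^{-1}(\Phi_n(p_t))$ on the same sheet of the double cover over $\hat Q_{n-1,\apoldec}^{(n-1)}$ within distance $5^{-n-3}$, contradicting~(\textbf{ShSep}). Hence $K$ misses a definite proportion of squares in $\hat Q\adec$, giving $\hmeas 2.(\holes(Q))\gtrsim\delta_n\hmeas 2.(Q)$.

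Step 3 is verbatim the cumulation argument of the Hilbert proof: nesting the hole estimates across widely-separated scales $k_j$ with $k_{j+1}=k_j+1+\lfloor G\log(1/\delta_{k_j})\rfloor$ produces $\hmeas 2.(K)\le\prod_j(1-\gamma\delta_{k_j})=0$ for the choice $\delta_n=1/(10+n)$ by~(\ref{eq:unrect_hilbert_p22})--(\ref{eq:unrect_hilbert_p24}). The main technical obstacle is Step 1: because the $P_{\infty,n}$ are only Lipschitz (not linear), the chain rule analysis must be carried out carefully using the classical smoothness of $P_{\infty,n}$ on the interiors of the cells of $Y_n$ and the convergence of its differentials to the identity as $n\nearrow\infty$ — which is where the finiteness of $\dirnset n.$ and the smallness of $\sum_k\delta_k^2$ are essential. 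Once this reduction is in place, Steps 2 and 3 are mechanical transcriptions of the Hilbert arguments.
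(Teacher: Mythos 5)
Your Steps 2 and 3 match the paper's (up to minor bookkeeping: the paper must redefine $i_n = \lceil -\log_{5}(5^{-n-N-5}c\delta_n)\rceil$ to absorb the $N$ extra subdivisions from the piecewise-affine approximation, and must replace $P_0$ by $P_{j,0}$ throughout; these are the modifications listed in the paper's \texttt{Step 2}). The problem is Step 1, which you correctly single out as the main obstacle but then resolve by a mechanism that does not work. You claim that away from the skeleton ``each $P_{\infty,n}$ is classically smooth at $\Phi(p)$'' and that $dP_{\infty,n}(\Phi(p))\to\mathrm{Id}$, so that a chain rule plus dominated convergence gives $J_2(d\Phi)=0$ on $E$. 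Neither claim is available. The map $P_{\infty,n}=\lim_i P_{i,n}$ is defined only on $Y_\infty=\bigcap_k\RadN k.$, an $\mathcal{H}^4$-null, fractal set: while each \emph{finite} composition $P_{i,n}$ is piecewise affine on the cells of $Y_i$, the infinite composition is not piecewise affine (or differentiable in any classical sense) on any relative neighbourhood in $Y_\infty$, and a McShane extension to $\real^4$ only gives Rademacher differentiability Lebesgue-a.e., which says nothing at points of the null set $Y_\infty$. Moreover, even granting some differential, the uniform convergence $P_{\infty,n}\to\mathrm{Id}_{Y_\infty}$ from Lemma~\ref{lem:unif_comp} does not imply convergence of differentials; in the Hilbert case the analogous step works only because the $P_n$ there are \emph{linear} orthogonal projections converging strongly, so $P_n\circ d\Phi(p)\to d\Phi(p)$ for the finite-rank map $d\Phi(p)$. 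Here there is no linear structure, and the construction is precisely engineered so that the infinitesimal geometry of $Y_\infty$ differs from that of every $Y_n$.

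The paper's fix, which it explicitly flags as ``the main difference from the proof of Theorem~\ref{thm:unrect_hilbert}'', avoids any pointwise differentiation of $P_{\infty,n}$: one keeps your (correct) computation $\int_K\chi_E J_2(d\Phi_n)\,d\hmeas 2.=0$ for each finite $n$, extends the $\Phi_n$ by McShane to an open set $U\supset E$ with $\hmeas 2.(U\setminus E)<\varepsilon$, and then invokes the weak* (approximate) \emph{lower semicontinuity} of the area functional along the weak*-convergent sequence $\tilde\Phi_n\xrightarrow{\text{w*}}\tilde\Phi_\infty$, which gives the one-sided inequality $\int_K\chi_E J_2(d\Phi)\,d\hmeas 2.\le\liminf_n\int_U J_2(d\tilde\Phi_n)\,d\hmeas 2.\le 49C^2\varepsilon$ — and an inequality in this direction is all that is needed. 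You should replace your chain-rule paragraph with this semicontinuity argument; the rest of your outline then goes through.
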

%%%%%%%%%%%%%%%%%%%%%%%%%%%%%%%%%%%%%%%%%%%%%%%%%%%%%%%%%%%%%%%%%%%%%%%%%%
\begin{proof}
  We will argue by contradiction assuming that $K\subset
  \Phi^{-1}(Y_\infty)$ and that $\hmeas
  2.(K)>0$. The main difference from the proof of
  Theorem~\ref{thm:unrect_hilbert} is \texttt{Step 1} where we resort
  to a weak* (approximate) lower-semicontinuity argument.
  \par\noindent\texttt{Step 1: Reduction to the case in which $\Phi$
    is a graph over $Y_0$.}
  \par Let $\Phi_n=P_{\infty,n}\circ \Phi$, which are well-defined and
  uniformly Lipschitz. By Lemma~\ref{lem:unif_comp} we also have that
  $\Phi_n$ converges weak* to $\Phi$.
  \par We now consider the Borel set $E\subset K$
  consisting of those points which are Lebesgue density points of the
  set of points where $\Phi$ and each $\Phi_n$ is differentiable and where $d\Phi_0$ has
  rank $<2$; our goal is to show that
  \begin{equation}
    \label{eq:unrect_r4_p1}
    \hmeas 2.(\im\Phi \cap P^{-1}_{\infty,0}(\Phi_0(E)))=0.
  \end{equation}
First, the area formula \cite[Thm.~5.1]{ambrosio-rectifiability}
gives $\hmeas 2.(Y_0\cap\Phi_0(E)) =
0$. Secondly, for each $n$, using the square complex structure of
$\{X_i\}_{i\le n}$, the set $Y_n\setminus F_n(\oskel X_n.)$ can be partitioned into countably
many closed sets $\{S_\alpha\}_\alpha$ (e.g.~taking each $F_n(Q)$ for
$Q\in\nwsqfm\addec,X_n.$)
such that each restriction
$P_{n,0}|_{S_\alpha}:S_{\alpha}\to P_{n,0}(S_\alpha)$ is biLipschitz, thus
giving:
\begin{equation}
  \label{eq:unrect_r4_p2}
  \hmeas 2.(Y_n\cap P^{-1}_{n,0}(\Phi_0(E)))=0.
\end{equation}
In particular, the area formula implies that:
\begin{equation}
  \label{eq:unrect_r4_p3}
  \int_K\chi_EJ_2(d\Phi_n)\,d\hmeas 2.=0. 
\end{equation}
We want to uset the lowersemicontinuity of the area
functional (see for example~\cite[Subsec.~2.6]{ambrosio-bvbook}), but we need the
domain of the maps $\Phi_n$, $\Phi_\infty$ to be open. Fix
$\varepsilon>0$ and choose $U\subset E$ open with $\hmeas
2.(U\setminus E)<\varepsilon$. By McShane's Lemma we can extend each
$\Phi_n$ to a $7C$-Lipschitz map $\tilde\Phi_n:U\to\real^4$ which
coincides on $E$ with $\Phi_n$. Up to passing to a subsequence we can
assume $\tilde\Phi_n\xrightarrow{\text{w*}}\tilde\Phi_\infty$ were
$\tilde\Phi_\infty|E=\Phi_\infty$. We can now invoke lower-semicontinuity
of area:
\begin{equation}
  \label{eq:unrect_r4_p4}
  \begin{split}
    \int_K\chi_EJ_2(d\Phi_\infty)\,d\hmeas
    2.\le&\int_UJ_2(d\tilde\Phi_\infty)\,d\hmeas 2.
    \le\liminf_{n\to\infty}\int_UJ_2(d\tilde \Phi_n)\,d\hmeas 2.\\
    &\le\limsup_{n\to\infty}\int_{U\setminus E}J_2(d\tilde
    \Phi_n)\,d\hmeas 2.+
    \limsup_{n\to\infty}\int_{E}J_2(d\Phi_n)\,d\hmeas 2.\\
    &\le 49C^2\varepsilon,
  \end{split}
\end{equation}
and~(\ref{eq:unrect_r4_p1}) follows letting $\varepsilon\searrow0$ and
applying the area formula.
\par\noindent\texttt{Step 2: Existence of square holes.}
\par The same argument as in \texttt{Step 2} of
Theorem~\ref{thm:unrect_hilbert} goes through with minor modifications. 
\par First, the (generalized) square-complex structure of
$X_n\setminus\oskel X_n.$ induces a generalized square-complex
structure on $Y_n\setminus F_n(\oskel X_n.)$ via the homeomorphism
$F_n$: thus, in the following, we will implicitly identify $\nwsqfm
k,X_n.$ (resp.~$\nwsqfm\addec,X_n.$) with $\nwsqfm
k,Y_n.$ (resp.~$\nwsqfm\addec,Y_n.$).
\par Second, compared to the $l^2$-case there are differences in
indexing the $\nwsqfm *,Y_n.$, $\nwsqfm *,X_n.$. In fact, as the
construction is no longer self-similar, $\nwsqfm k,X_n.$ does not
represent the set of squares of $X_n$ of generation $k$ (and side
length $5^{-k}$), but the set of squares obtained by subdividing each
square of $\nwsqfm ,X_n.$ $k$-times (and so the side length is
$5^{-k}$-times the side length of the parent square in $\nwsqfm
,X_n.$). Moreover, we need a notation for the set of squares obtained
by subdividing each square of $\nwsqfm\addec, X_n.$ $k$-times: we will
use $\nwsqfm{\addec,k},X_n.$.
\par Third, in \texttt{Step 1} of Construction~\ref{constr:r4} we took
a piecewise-affine approximation of $\Psi_\delta$ which involved
subdividing squares $N$-extra times. We must thus modify the
definition of $i_n$~(\ref{eq:unrect_hilbert_p6}) letting:
\begin{equation}
    \label{eq:unrect_r4_p5}
    i_n = \lceil -\log_{5}(5^{-n-N-5}c\delta_n)\rceil.
  \end{equation}
\par Fourth, we have to consider a square $Q\in
P_{n-1,0}(\nwsqfm{\addec,i_n-n},Y_{n-1}.)$ and partition $\hat Q\adec$
into $\simeq 5^{i_n-n}$ annuli consisting of squares of
$P_{n-1,0}(\nwsqfm{\addec,i_n-n},Y_{n-1}.)$. Having fixed such an
annulus $A$, the goal is again to show that $K=\dom\Phi\subset Y_0$
(we have reduced to the case in which $\Phi$ is a graph over a subset
of $Y_0$ in the previous \texttt{Step 1}) has to miss one of
the squares of $A$.
\par Then the proof follows the same logic as in \texttt{Step 2} of
Theorem~\ref{thm:unrect_hilbert} with some minor notational
modifications:
\begin{itemize}
\item $\sqfm j,Y_j.$ becomes $\nwsqfm ,Y_j.$, compare the previous
  discussion about idexing.
\item $\sqfm i_n,Q\zerodec_{n-1}.$ becomes $\nwsqfm
  i_n-n,Q\zerodec_{n-1}.$, where $\nwsqfm k,Q.$ denotes the set of
  subsquares of $Q$ obtained by taking $k$-iterated subdivisions.
\item We cannot simply use the projejction $P_0$, but must use
  $P_{j,0}$ when projecting points from $Y_j$ to $Y_0$. In particular,
  instead of writing $Q\jdec_{i_n,\beta}=P_0^{-1}
(Q\zerodec_{i_n,\beta})\cap
Q\jdec_{j,\alpha}$, we need to consider $Q\jdec_{i_n,\beta}=P_{j,0}^{-1}
(Q\zerodec_{i_n,\beta})\cap
Q\jdec_{j,\alpha}$.
\end{itemize}
\par\noindent\texttt{Step 3: The choice of the $\delta_k$'s.}
\par Here we have to guarantee that~(\ref{eq:claim_j_s1}) holds; this
can be achieved by shifting the sequence we used in
Theorem~\ref{thm:unrect_hilbert} to the right:
\begin{equation}
  \label{eq:eq:unrect_r4_p6}
  \delta_k=\frac{1}{10^9+k}.
\end{equation}
\end{proof}
%%%%%%%%%%%%%%%%%%%%%%%%%%%%%%%%%%%%%%%%%%%%%%%%%%%%%%%%%%%%%%%%%%%%%%%%%%
\section{$k$-current in $\real^{k+2}$}
\label{sec:krk}
%%%%%%%%%%%%%%%%%%%%%%%%%%%%%%%%%%%%%%%%%%%%%%%%%%%%%%%%%%%%%%%%%%%%%%%%%%
The $k$-current in $\real^{k+2}$ is constructed resorting to a trick that was already employed
in \cite[Sec.~4]{schioppa_metric_sponge}: once one is able to
construct a $2$-current which meets 
all Lipschitz surfaces
which are graphs over a coordinate plane in a $\hmeas 2.$-null set, one can iterate
over all planes parallel to a pair of coordinate axes. In the
following we let $\{e_\xi\}_{1\le \xi\le l}$ denote the standard
orthonormal basis of $\real^l$ (where $l=k$ or $l=k+2$)
and for $\xi<\zeta$ we let $e_\xi\oplus e_\zeta$ denote the plane
spanned by $e_\xi$ and $e_\zeta$. Finally, we will identify the set of
planes $\{e_\xi\oplus e_\zeta\}_{1\le \xi<\zeta\le k}$ with
$\zahlen_{{k\choose 2}}$ and we will write equations like $s=e_\xi\oplus
e_\zeta\mod {{k\choose 2}}$ or $e_\xi\oplus e_\zeta=2\mod
\binom{k}{2}$.
%%%%%%%%%%%%%%%%%%%%%%%%%%%%%%%%%%%%%%%%%%%%%%%%%%%%%%%%%%%%%%%%%%%%%%%%%%
\def\mpj{\text{\normalfont pj}_{\xi,\zeta}}
%%%%%%%%%%%%%%%%%%%%%%%%%%%%%%%%%%%%%%%%%%%%%%%%%%%%%%%%%%%%%%%%%%%%%%%%%%
\begin{constr}[Modifications to Construction~\ref{constr:basic_map}]
   \label{constr:rk_1}
Now Construction~\ref{constr:basic_map} is generalized adding an additional parameter:
a 2-plane $e_\xi\oplus e_\zeta$. Let $k$ be a $k$-cube isometric to
$[0,5^{-i}]$ and let $\mpj$ denote the projection onto
$e_\xi\oplus e_\zeta$ and set $Q=\mpj(K)$. Let $Q\adec$, $Q\cdec$,
$Q\odec$, $\tilde Q$, etc\dots as in
Construction~\ref{constr:basic_map} and set:
\begin{equation}
  \label{eq:constr_rk1_1}
  \begin{aligned}
    K\adec&=\mpj^{-1}(Q\adec)\\
    K\odec&=\mpj^{-1}(Q\odec)\\
    K\cdec&=\mpj^{-1}(Q\cdec).
  \end{aligned}
\end{equation}
We use standard covering theory to find a double cover
$\tilde\pi:\tilde K\adec\to K\adec$, and a lift $\widetilde{\mpj}:
\tilde K\adec\to \tilde Q\adec$
such that the following diagram commutes:
\begin{equation}
  \label{eq:constr_rk1_2}
    \xy
    (0,20)*+{\tilde K\adec} = "tka"; (20,20)*+{K\adec} = "ka";
    (0,0)*+{\tilde Q\adec} = "tqa"; (20,0)*+{Q\adec} = "qa";
    {\ar "tka"; "ka"}?*!/_2mm/{\tilde\pi};
    {\ar "tqa"; "qa"}?*!/_2mm/{\tilde\pi_Q};
    {\ar "tka"; "tqa"}?*!/^4mm/{\widetilde{\mpj}};
    {\ar "ka"; "qa"}?*!/_4mm/{\mpj};
    \endxy
\end{equation}
where $\tilde\pi_Q:\tilde Q\adec\to Q\adec$ is the double cover from
Construction~\ref{constr:basic_map}.
We then glue $\tilde K\adec$ 
back to $K\odec\cup K\cdec$ by gluing together the pair of points of
$\partial\tilde K\adec$ that are mapped to the same point by
$\tilde\pi$. If $\tilde K$ denotes the resulting cube-complex,
then $\tilde\pi$ extends to a branched covering $\tilde\pi:\tilde K\to
K$ and we also obtain an extension $\widetilde{\mpj}:\tilde K\to
\tilde Q$ of $\widetilde{\mpj}|\Inside(\tilde K\adec)$ which makes the 
following diagram commute:
\begin{equation}
  \label{eq:constr_rk1_3}
    \xy
    (0,20)*+{\tilde K} = "tka"; (20,20)*+{K} = "ka";
    (0,0)*+{\tilde Q} = "tqa"; (20,0)*+{Q} = "qa";
    {\ar "tka"; "ka"}?*!/_2mm/{\tilde\pi};
    {\ar "tqa"; "qa"}?*!/_2mm/{\tilde\pi_Q};
    {\ar "tka"; "tqa"}?*!/^4mm/{\widetilde{\mpj}};
    {\ar "ka"; "qa"}?*!/_4mm/{\mpj};
    \endxy
\end{equation}
we then obtain $\Psi:\tilde K\to\real^2$ as the composition
$\Psi=\Psi_{\tilde Q}\circ\widetilde{\mpj}$ where $\Psi_{\tilde
  Q}:\tilde Q\to\real^2$  
is the map we built in Construction~\ref{constr:basic_map}.
\end{constr}
%%%%%%%%%%%%%%%%%%%%%%%%%%%%%%%%%%%%%%%%%%%%%%%%%%%%%%%%%%%%%%%%%%%%%%%%%%
% @def
\def\nwcell#1,#2.{\setbox1=\hbox{$#1$\unskip}\setbox2=\hbox{$#2$\unskip}\text{\normalfont
    Cell}\ifdim\wd1>0pt_{#1}\fi({\ifdim\wd2>0pt #2\else X_j\fi}) }
\def\kskel#1.{\setbox1=\hbox{$#1$\unskip}\text{\normalfont
    Sk}_{k-1}({\ifdim\wd1>0pt #1\else X_{j-1}\fi})}
%%%%%%%%%%%%%%%%%%%%%%%%%%%%%%%%%%%%%%%%%%%%%%%%%%%%%%%%%%%%%%%%%%%%%%%%%%
\begin{constr}[Modification to Construction~\ref{constr:r4}]
  \label{constr:rk_2}
  \par\noindent\texttt{Step 1: Piecewise affine approximation.}
  \par For fixed $\delta,\xi,\zeta$, let $\Psi_\delta:\tilde
  K\to\real^2$ be as in Construction~\ref{constr:rk_2} using the
  parameters $\delta$, $e_{\xi}\oplus e_\zeta$.
If $\tilde K^{(m)}$ denotes the $m$-th iterated subdivision of $\tilde
K$, we can find $N\in\natural$ and a piecewise affine 
approximation $\Phi_\delta:\tilde K^{(N)}\to\real^2$ of $\Psi_\delta$
such that the following analogs of~(\ref{eq:constr_r4_3}),
(\ref{eq:constr_r4_4}) and (\ref{eq:constr_r4_5}) hold:
\begin{align}
  \label{eq:constr_rk2_1}
  \glip \Phi_\delta.&\in\left[\frac{\delta}{16},23\delta\right]\\
  \label{eq:constr_rk2_2}
  \|\Phi_\delta(p_1)-\Phi_\delta(p_2)\| &\ge\frac{\delta}{3}\phi(r(\mpj(p_1)))\\
  \label{eq:constr_rk2_3}
  \|\Phi_\delta\|&\le 2\delta\diam K.
\end{align}
We let $X_0=[0,1]^k$ and $F_0:X_0\to\bigoplus_{1\le\xi\le
  k}e_\xi\subset\real^{k+2}$ denote the standard
isometric embedding. We obtain $X_1$ from $X_0$ by applying
Construction~\ref{constr:rk_1}
with $e_\xi\oplus e_\zeta=0\mod\binom{k}{2}$ and then let 
\begin{equation}
  \label{eq:constr_rk2_4}
  F_1 = F_0\circ \pi_{1,0} + \Phi_{\delta_1}\otimes(e_{k+1}\oplus e_{k+2}).
\end{equation}
\par\noindent\texttt{Step 2: Construction of $X_{j+1}$ and $F_{j+1}$.}
\par We need first to generalize the notation. We let $\nwcell ,.$ denote the set of k-dimensional cells
of $X_j$; while $X_1$ is a $k$-cube complex,
as in Construction~\ref{constr:r4}, $X_j$ does not have a $k$-cube complex structure, 
but it is a union of its $k$-cells $\nwcell ,.$ away from the
$(k-1)$-skeleton $\kskel .$ of $X_{j-1}$, where $\kskel .$ embedds
isometrically in $X_j$. Moreover, we let $\kskel X_j.=\kskel
.\cup\bigcup_{K\in\nwcell ,.}\partial K$; in particular:
\begin{equation}
   \label{eq:constr_rk2_5}
   X_j\setminus\kskel X_j. = \bigcup_{K\in\nwcell ,.}\Inside(K).
\end{equation}
For $K\in\nwcell ,.$ we define the radial basis function
\begin{equation}
  \label{eq:constr_rk2_6}
  \varphi_K(x) = 
  \begin{cases}
    \exp\left(
      -\frac{\sigma_j}{\dist(\pi_{\tau(K)}(x), F_j(\partial K))}
    \right)\times 46\diam(F_j(K))&\text{if
      $\pi_{\tau(K)}\in F_j(\Inside(K))$}\\
    0&\text{otherwise,}
  \end{cases}
\end{equation}
where $\pi_{\tau(K)}$ denotes the orthogonal projection onto the
affine $k$-plane $\tau(K)$ containing $F_j(K)$.
We then define
the radial basis neighbourhood $\RadN j.$ as:
\begin{multline}
  \label{eq:constr_rk2_7}
  \RadN j. = \biggl\{
  p\in\real^{k+2}:\text{there is a $K\in\nwcell ,.$}: p = x+y, x\in
  F_j(K),\\
  y\perp
  \tau(K), \text{and $\|y\|\le\varphi_K(x)$}
\biggr\}.
\end{multline}
We then define $P_j:\RadN j.\to Y_j$ by $p=x+y\mapsto X$ and, as in
Section~\ref{sec:2r4}, it follows that:
\begin{description}
\item[(Claim $j$)] For each $\varepsilon_j>0$ there is a $\sigma_j>0$
  such that $P_j$ is $(1+\varepsilon_j)$-Lipschitz. 
\end{description}
Let $\nwcell m,X_j.$ denote the set of cells obtained by subdividing
each cell of $\nwcell ,.$ $m$-times, and let:
\begin{equation}
  \label{eq:constr_rk2_8}
  \nwcell\infty,. = \bigcup_{m\ge1}\nwcell m,..
\end{equation}
Now a cell $K\in\nwcell\infty,.$ is adapted to $\RadN j.$ if the
$(23\delta_j\diam F_j(K))$-neighborhood of $F_j(K)$ is contained in
$\RadN j.$ and if, denoting by $\PAR(K)\in\nwcell , X_j.$ the unique
cell containing $K$, one has:
\begin{equation}
  \label{eq:tritanomaly-aug-add3}
  \max_{x\in K}\dist(x,\partial K) \le \delta_j\max_{x\in K}\dist(x,\partial\PAR(K)).
\end{equation} We let $\nwcell \addec,.$ denote the set of maximal
adapted $k$-cubes of $\nwcell\infty,.$; the elements of $\nwcell
\addec,.$ have pairwise disjoint interia and satisfy:
\begin{equation}
  \label{eq:constr_rk2_9}
  X_j \setminus\kskel X_j. = \bigcup_{K\in\nwcell\addec,X_j.}\Inside(K).
\end{equation}
Fix $e_\xi\oplus e_\eta = j\mod\binom{k}{2}$ and apply
Construction~\ref{constr:rk_1} to each $K\in\nwcell\addec,.$ to get
$\Phi_{K,\delta_{j+1}}:\tilde K\to\real^2$. As in
Construction~\ref{constr:r4} we can ensure that if $K_1\ne K_2$
$\Phi_{K_1,\delta_{j+1}}$ and $\Phi_{K_2,\delta_{j+1}}$ can be taken
to differ up to composition with translations and dilations.
Let $\dirnset j.=\bigcup_{K\in\nwcell ,.}\tau_0(K)$ where $\tau_0(K)$
denotes the $k$-plane parallel to $\tau(K)$ and passing through the
origin. By induction we assume $\dirnset j.$ to be finite and choose a
finite set of pairs $\{(e_{1,K},e_{2,K})\}_{K\in\nwcell\addec,.}$ such
  that each $(e_{1,K},e_{2,K})$ is an orthonormal basis of the
  orthogonal complement of $\tau_0(\PAR(K))$ where $\PAR(K)\in\nwcell
  ,.$ is the $k$-cell containing $K$. We can then define:
\begin{equation}
  \label{eq:constr_rk2_10}
  F_{j+1}(x) = F_j\circ\pi_{j+1}(x) + \sum_{K\in\nwcell
    \addec,.}\Phi_{K,\delta_{j+1}}(x)\otimes(e_{1,K}\oplus e_{2,K}),
\end{equation}
and get 
\begin{equation}
  \label{eq:constr_rk2_11}
  \glip F_{j+1}.\in\left[16^{-1}(1+\sum_{l\le j+1}\delta_l^2)^{1/2}, 23(1+\sum_{l\le j+1}\delta_l^2)^{1/2}\right].
\end{equation}
\end{constr}
%%%%%%%%%%%%%%%%%%%%%%%%%%%%%%%%%%%%%%%%%%%%%%%%%%%%%%%%%%%%%%%%%%%%%%%%%%
As in the $\real^4$-case we let $Y_i=F_i(X_i)$ and
$Y_\infty=F_\infty(X_\infty)$.
\begin{thm}[$2$-unrectifiability of $Y_\infty\subset\real^{k+2}$]
  \label{thm:unrect_rk}
  $Y_\infty$ is purely $2$-unrectifiable in the sense that whenever
  $K\subset\real^2$ is compact and $\Phi:K\to \real^{k+2}$ is Lipschitz,
  $\hmeas 2.(\Phi^{-1}(Y_\infty)\cap K)=0$.
\end{thm}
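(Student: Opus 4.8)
The plan is to follow the same three-step architecture as in Theorems~\ref{thm:unrect_hilbert} and~\ref{thm:unrect_r4}, reducing the $k$-dimensional problem in $\real^{k+2}$ to a family of $2$-dimensional problems, one for each coordinate plane. We argue by contradiction: suppose $K\subset\real^2$ is compact with $\hmeas 2.(K)>0$ and $\Phi:K\to\real^{k+2}$ is Lipschitz with $\Phi(K)\subset Y_\infty$. By the area formula and Kirchheim's metric differentiation theorem, we may pass to a set of positive measure on which $\Phi$ is differentiable, and we then study the rank of the compositions of $\Phi$ with the coordinate projections.

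\textbf{Step 1: reduction to a graph over a coordinate plane.} Let $\varpi_{\xi,\zeta}:\real^{k+2}\to e_\xi\oplus e_\zeta$ denote orthogonal projection onto the $(\xi,\zeta)$-coordinate plane (for $1\le\xi<\zeta\le k$; note $F_0$ embeds $X_0=[0,1]^k$ into $\bigoplus_{1\le\xi\le k}e_\xi$, so each such projection makes sense on $Y_0$). As in \texttt{Step~1} of the proof of Theorem~\ref{thm:unrect_r4}, working with the maps $P_{\infty,n}$ and the weak* lower-semicontinuity of the area functional, one shows that on the set of Lebesgue density points where $\Phi$ and all the $\Phi_n=P_{\infty,n}\circ\Phi$ are differentiable, the differential $d(\varpi_{\xi,\zeta}\circ\Phi)$ has full rank $2$ for \emph{some} pair $(\xi,\zeta)$ — otherwise $d(P_{\infty,0}\circ\Phi)$ would have rank $<2$ on a set of positive measure (since $d(P_{\infty,0}\circ\Phi)$ is determined by the $k$ coordinate blocks and $P_{\infty,0}\circ\Phi$ maps into the $k$-cube $Y_0$), and then the argument via~(\ref{eq:unrect_r4_p1}) gives $\hmeas 2.(K)=0$. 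After partitioning $K$ into countably many pieces we may therefore fix a single pair $(\xi,\zeta)$ and, applying Kirchheim's Lipschitz inverse function theorem~\cite[Thm.~9]{kirchheim_metric_diff} exactly as before, reduce to the case that $\varpi_{\xi,\zeta}\circ F_\infty^{-1}\circ\Phi$ is the identity — i.e.\ $\Phi$ is (the graph of a bi-Lipschitz map over) a subset of the $(\xi,\zeta)$-coordinate square of $Y_0$.

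\textbf{Step 2: square holes at every scale $n$ with $n\equiv(\xi,\zeta)\bmod\binom{k}{2}$.} This is the heart of the argument, and it is where the construction pays off. By the periodic scheduling in Construction~\ref{constr:rk_2} (at stage $j$ we apply Construction~\ref{constr:rk_1} with the plane $e_\xi\oplus e_\eta=j\bmod\binom{k}{2}$), infinitely many stages $n$ use precisely the plane $(\xi,\zeta)$ we fixed. For each such $n$, the branched-cover surgery happens inside cells $K$ whose image $\varpi_{\xi,\zeta}(F_n(K))$ is a genuine square on which we have polar coordinates and the sheet-separation property \textbf{(ShSep)}, and the perturbation $\Phi_{K,\delta_n}$ is added in the two-dimensional direction orthogonal to $\tau_0(\PAR(K))$. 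Because we have reduced to a graph over the $(\xi,\zeta)$-plane, the holonomy/two-sheet argument of \texttt{Step~2} of Theorem~\ref{thm:unrect_r4} transplants verbatim after composing with $\varpi_{\xi,\zeta}$: tracking centers of sub-cells around an annulus, using the Lipschitz bounds~(\ref{eq:constr_rk2_1})--(\ref{eq:constr_rk2_3}) on $\Phi_\delta$, the bi-Lipschitz bounds~(\ref{eq:constr_rk2_11}) on $F_{j+1}$, the projections $P_{j,0}$, and the lower bound~(\ref{eq:constr:basic_map_1}) via $\phi(r(\varpi_{\xi,\zeta}(\cdot)))\gtrsim 5^{-n-N-5}$, one derives for the appropriate $c$ a contradiction with \textbf{(ShSep)} unless $K$ misses the interior of some cell in each annulus. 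Collecting these missed cells yields, as in~(\ref{eq:unrect_hilbert_p16}) and~(\ref{eq:unrect_hilbert_p17}), a set $\holes$ with $\hmeas 2.\bigl(\holes(Q)\bigr)\ge\gamma\delta_n\hmeas 2.(Q)$ for each parent cell $Q$, so that $\hmeas 2.(K)\le(1-\gamma\delta_n)\hmeas 2.(Y_0\cap(e_\xi\oplus e_\zeta))$.

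\textbf{Step 3: cumulate.} Iterating Step~2 over the nested square structure, exactly as in \texttt{Step~3} of Theorem~\ref{thm:unrect_hilbert}, gives $\hmeas 2.(K)\le\prod_j(1-\gamma\delta_{k_j})$ where the $k_j$ now range over the arithmetic progression of stages congruent to $(\xi,\zeta)\bmod\binom{k}{2}$ (with gaps controlled by a constant times $\log(1/\delta)$). With the choice $\delta_k=1/(10^9+k)$ from~(\ref{eq:eq:unrect_r4_p6}), restricting to an arithmetic progression of bounded step only changes the constant in the harmonic-series estimate~(\ref{eq:unrect_hilbert_p22})--(\ref{eq:unrect_hilbert_p24}), so $\sum_j\delta_{k_j}=\infty$ still holds and we conclude $\hmeas 2.(K)=0$, the desired contradiction.

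\textbf{The main obstacle} I anticipate is Step~1, the rank reduction: one must be careful that ruling out full rank $2$ for \emph{every} coordinate pair $(\xi,\zeta)$ genuinely forces $\mathrm{rank}\,d(P_{\infty,0}\circ\Phi)<2$ a.e., which uses that $F_0(X_0)$ lies in the coordinate subspace $\bigoplus_{\xi\le k}e_\xi$ and that the area Jacobian $J_2$ of a rank-$\le 1$ differential vanishes; combined with the weak*/lower-semicontinuity apparatus already set up in Theorem~\ref{thm:unrect_r4}, this should go through, but it is the step most sensitive to the higher-codimension geometry. Steps~2 and~3 are, by design of the construction, essentially a notational generalization of the $\real^4$ case once the precomposition with $\varpi_{\xi,\zeta}$ is in place.
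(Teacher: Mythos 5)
Your proposal follows essentially the same route as the paper: the same rank/minor reduction over all coordinate pairs $(\xi,\zeta)$ with $\xi<\zeta\le k$ using the weak* lower-semicontinuity apparatus and Kirchheim's theorem, the same restriction to the stages $n$ scheduled (mod $\binom{k}{2}$) to act on the fixed plane $e_{\xi_0}\oplus e_{\zeta_0}$ so that \textbf{(ShSep)} applies after composing with the projection, and the same hole-accumulation along the (now sparser, but still harmonically divergent) subsequence of scales. The concern you flag about Step 1 is resolved exactly as you suggest, since $Y_0\subset\bigoplus_{\xi\le k}e_\xi$ makes the vanishing of all $k$-coordinate $2\times 2$ minors equivalent to $J_2(d\Phi_0)=0$.
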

%%%%%%%%%%%%%%%%%%%%%%%%%%%%%%%%%%%%%%%%%%%%%%%%%%%%%%%%%%%%%%%%%%%%%%%%%%
\def\mpjj{\text{\normalfont pj}_{\xi_j,\zeta_j}}
%%%%%%%%%%%%%%%%%%%%%%%%%%%%%%%%%%%%%%%%%%%%%%%%%%%%%%%%%%%%%%%%%%%%%%%%%%
\begin{proof}
  We will focus on the differences with the proof of
  Theorem~\ref{thm:unrect_r4}.
\par\noindent\texttt{Step 1: Reduction to the case in which $\Phi$
    is a graph over $Y_0$.}
  \par Let $\Phi:K\subset[0,1]^2\to Y_\infty$ be Lipschitz with $\hmeas
  2.(K)>0$. Let $\Phi_n=P_{\infty,n}\circ\Phi$ and $E\subset K$ be the
  set of differentiability points $p$ of $\{\Phi_n\}_n,\Phi$ such that for each pair $(\xi,\zeta)$ with
  $1\le\xi<\zeta\le k$ (note the $k$, not $k+2$! Our construction has
  already screwed-up the behavior in the last two coordinates):
  \begin{equation}
    \label{eq:unrect_rk_p1}
    \det
    \begin{pmatrix}
      \langle e_\xi, d\Phi_0(\partial_x)\rangle & 
      \langle e_\xi, d\Phi_0(\partial_y)\rangle \\
      \langle e_\zeta, d\Phi_0(\partial_x)\rangle & 
      \langle e_\zeta, d\Phi_0(\partial_y)\rangle
    \end{pmatrix}(p)=0.
  \end{equation}
  As $Y_0$ lies in $\bigoplus_{\xi\le k}e_\xi$ the area formula gives:
  \begin{equation}
    \label{eq:unrect_rk_p2}
    \hmeas 2.(Y_0\cap \Phi_0(E))=0.
  \end{equation}
  Now, using that $\Phi_n\xrightarrow{\text{w*}}\Phi$ and the weak*
  lower-semicontinuity of the area functional as in \texttt{Step 1} of
  Theorem~\ref{thm:unrect_r4} we conclude that:
  \begin{equation}
    \label{eq:unrect_rk_p3}
    \hmeas 2.(Y_\infty\cap \Phi(E))=0.
  \end{equation}
  Thus, up to passing to a countable partition of $K$ and throwing
  away an $\hmeas 2.$-null set we can assume that there are
  $1\le\xi_0<\zeta_0\le k$ such that for each $p\in K$:
  \begin{equation}
    \label{eq:unrect_rk_p4}
    \det
    \begin{pmatrix}
      \langle e_{\xi_0}, d\Phi_0(\partial_x)\rangle & 
      \langle e_{\xi_0}, d\Phi_0(\partial_y)\rangle \\
      \langle e_{\zeta_0}, d\Phi_0(\partial_x)\rangle & 
      \langle e_{\zeta_0}, d\Phi_0(\partial_y)\rangle
    \end{pmatrix}(p)\ne 0.
  \end{equation}
  Using \cite[Thm.~9]{kirchheim_metric_diff} in, which is essentially a 
measurable and Lipschitz version of the Inverse Function Theorem, up
to further partioning and throwing away an $\hmeas 2.$-null set we are
reduced to the case $K\subset\pi_{e_{\xi_0}\oplus e_{\zeta_0}}(Y_0)$
where $\pi_{e_{\xi_0}\oplus e_{\zeta_0}}$ denotes the orthogonal
projection onto $e_{\xi_0}\oplus e_{\zeta_0}$.
\par\noindent\texttt{Step 2: Existence of square holes.}
\par The proof now proceeds as in \texttt{Step 2} of
Theorems~\ref{thm:unrect_r4}, \ref{thm:unrect_hilbert} but we spell
out more details because we deal both with squares and $k$-dimensional
cells. 
\par Let $n-1=e_{\xi_0}\oplus e_{\zeta_0}\mod\binom{k}{2}$ and let
$Q\in\pi_{e_{\xi_0}\oplus
  e_{\zeta_0}}(P_{n-1,0}(\nwcell\addec,Y_{n-1}.))$ where 
\begin{equation}
  \label{eq:unrect_rk_p5}
  i_n = \lceil -\log_{5}(5^{-n-N-5}c\delta_n)\rceil;
\end{equation}
let $\hat Q\adec$ be as in Construction~\ref{constr:basic_map} 
and partition $\hat Q\adec$ into $\approx 5^{i_n-n}$ annuli
consisting of squares of $\sqfm i_n-n,Q.$ (i.e.~subdivide $Q$ into 
25 subsquares $(i_n-n)$-times). We consider one such an
annulus $A$. Our goal is to show that $K$ has to miss the interior of one of
the squares in $A$. Let $p_{\alpha}$, $p_{\alpha+1}$ be as in
\texttt{Step 2} of the proof of Theorem~\ref{thm:unrect_r4}, and
we will show that $\Phi_j(p_\alpha)$ and $\Phi_j(p_{\alpha+1})$ belong to
the same cell of $\nwcell\addec,Y_j.$. This is true by construction
when $j=0$ and for $j\ge 1$ we assume by induction that $\Phi_{j-1}(p_\alpha)$,
$\Phi_{j-1}(p_{\alpha+1})$ belong to the same $K\jonedec_{j-1}\in\nwcell
\addec,Y_{j-1}.$. 
Let $K\jdec_{j,\beta}\in\nwcell \addec,Y_j.$ denote the
cell containing $\Phi_j(p_\beta)$ and assume by contradiction that
$K\jdec_{j,\alpha}\ne K\jdec_{j,\alpha+1}$. In the following we will
use the decorators ${}\adec$, ${}\odec$, ${}\cdec$ and $\hat{}$\ as in
Constructions~\ref{constr:basic_map} and \ref{constr:rk_1}: for example $\hat
K_{j-1,\apoldec}\jonedec$ is obtained as $\hat K\adec$ if we let
$K=K\jonedec_{j-1}$. In particular, as $K\jdec_{j,\alpha}\ne
K\jdec_{j,\alpha+1}$ we must have $P_{j-1}(K\jdec_{j,\beta})\subset
K\jonedec_{j-1,\apoldec}$. Let now
$Q\zerodec_{i_n,\beta}\in\pi_{e_{\xi_0}\oplus
  e_{\zeta_0}}(P_{j-1,0}(\nwcell{i_n-j+1},Y_{j-1}.))$ be the 
square containing $p_\beta$. Note that $Q\zerodec_{i_n,\beta}\subset Q$ can be
identified with a square of an iterated subdivision of $Q$, more
precisely, $Q\zerodec_{i_n,\beta}\in\nwsqfm i_n-n,Q.$. Let
$K\jdec_{i_n,\beta}=P_{j,0}^{-1}(Q\zerodec_{i_n,\beta})\cap
K\jdec_{j,\beta}$, and let $q\jdec_\beta$ denote the center of the
cell $K\jdec_{i_n,\beta}$. As $\Phi$ is $C$-Lipschitz:
\begin{equation}
  \label{eq:unrect_rk_p6}
  d(\Phi_j(p_\alpha), \Phi_j(p_{\alpha+1}))\le 4\sqrt{k}C\times c\delta_n\diam
  Q.
\end{equation}
As $F_j$ is $\glip F_\infty.$-Lipschitz and as $\diam
F_j^{-1}(K\jdec_{j,\beta})\le 2c\delta_n\sqrt{k}\diam Q$,
\begin{align}
  \label{eq:unrect_rk_p7}
  d(q\jdec_\beta,\Phi_j(p_\beta))&\le2\sqrt{k}\glip F_\infty.\times
  c\delta_n\diam Q\\
  \label{eq:unrect_rk_p8}
  d(q\jdec_\alpha, q\jdec_{\alpha+1})&\le4(\sqrt{k}C + \glip
  F_\infty.)\times c\delta_n\diam Q.
\end{align}
Let $S\jonedec_{j-1}=F^{-1}_{j-1}(K\jonedec_{j-1})$ and
$S\jdec_{j,\beta}=F^{-1}_{j}(K\jdec_{j,\beta})$; we must have
$S\jdec_{j,\alpha}\ne S\jdec_{j,\alpha+1}$ and
$\pi_{j-1}(F_j^{-1}(q_\beta\jdec))\in S\jonedec_{j-1,\apoldec}$. Note that
$F^{-1}_j(q\jdec_\beta)$ must be at distance $\ge 5^{-3}\diam(\partial
S\jonedec_{j-1,\apoldec})$ from $\partial
S\jonedec_{j-1,\apoldec}$ so that
\begin{equation}
  \label{eq:unrect_rk_p9}
  \phi(r(\mpjj(F_j^{-1}(q\jdec_\beta))))\ge 5^{-3}\diam(\partial
S\jonedec_{j-1,\apoldec}),
\end{equation}
where $e_{\xi_j}\oplus e_{\zeta_j}=j-1 \mod\binom{k}{2}$. As
$F_j^{-1}(q\jdec_\alpha)\ne F_j^{-1}(q\jdec_{\alpha+1})$, they belong
to different sheets of the double cover, and as
$\pi_{j-1}(S\jdec_{j,\alpha})$ and $\pi_{j-1}(S\jdec_{j,\alpha+1})$
are adjacent, we let $\hat q\jdec_\alpha$ be the center of the cell
of $\nwcell i_n-j,.$ adjacent to $K\jdec_{i_n,\alpha+1}$ and such that
$\pi_{j-1}(F_j^{-1}(q\jdec_\alpha)) = \pi_{j-1}(F_j^{-1}(\hat
q\jdec_\alpha))$. We now have:
\begin{align}
  \label{eq:unrect_rk_p10}
  r(\mpjj(F_j^{-1}(q\jdec_\alpha)) &= r(\mpjj(F_j^{-1}(\hat
  q\jdec_\alpha))\\
  \label{eq:unrect_rk_p11}
  \left|
     \theta(\mpjj(F_j^{-1}(q\jdec_\alpha)) -  \theta(\mpjj(F_j^{-1}(q\jdec_\alpha))
  \right| &= \pi.
\end{align}
Invoking~(\ref{eq:constr:basic_map_1}) we get:
\begin{equation}
  \label{eq:unrect_rk_p12}
  d(q\jdec_\alpha,\hat q\jdec_\alpha) \ge\frac{5^{-3}}{2}\delta_j\diam(\partial
S\jonedec_{j-1,\apoldec})\ge\frac{5^{-3}}{2\glip
  F_0.}\delta_n\diam Q, 
\end{equation}
where we used that $F_\infty$ and the maps $P_{\infty,n}$ are Lipschitz and that $Q$ lies in the
$F_0$-image of $S\jonedec_{j-1,\apoldec}$. But as $\hat q_\alpha\jdec$
is the center of the cell of $\nwcell i_n-j,.$ adjacent to
$K\jdec_{i_n,\alpha+1}$ we get:
\begin{equation}
  \label{eq:unrect_rk_p13} 
  d(q_\alpha\jdec, \hat q_\alpha\jdec) \le 16(\sqrt{k}C + \glip
  F_\infty.)\times c\delta_n\diam Q.
\end{equation}
Thus, if $c$ is chosen sufficiently small in function of $\sqrt{k}, C,
\glip F_\infty.$ we obtain a contradiction and conclude that
$K\jdec_{j,\alpha}=K\jdec_{j,\alpha+1}$. A consequence of this
discussion, specialized to $j=n$, is that $\Phi_n(p_1)$ and
$\Phi_n(p_t)$ belong to the same sheet of the double cover
$P_{n-1}^{-1}(\hat K^{(n-1)}_{n-1,\apoldec})\cap Y_n\to \hat
K^{(n-1)}_{n-1,\apoldec}$ while the choice of $c$ gives:
\begin{equation}
  \label{eq:unrect_rk_p14} 
  d(F_n^{-1}(\Phi_n(p_1)), F_n^{-1}(\Phi_n(p_t)))\le 5^{-3}\diam Q.
\end{equation}
Note, however, that as $n-1=e_{\xi_0}\oplus
e_{\zeta_0}\mod\binom{k}{2}$, from the definition of $\Psi$ in
Construction~\ref{constr:rk_1} and (\textbf{ShSep}) in
Construction~\ref{constr:basic_map} we get a contradiction. Thus $K$
misses one of the squares of the annulus $A$.
\end{proof}
%%%%%%%%%%%%%%%%%%%%%%%%%%%%%%%%%%%%%%%%%%%%%%%%%%%%%%%%%%%%%%%%%%%%%%%%%%
\bibliographystyle{alpha}
\bibliography{sponge2}
%%%%%%%%%%%%%%%%%%%%%%%%%%%%%%%%%%%%%%%%%%%%%%%%%%%%%%%%%%%%%%%%%%%%%%%%%%
\end{document}